\newtheorem{theorem}{Theorem}
\newtheorem{lemma}[theorem]{Lemma}
\newtheorem{corollary}[theorem]{Corollary}
\newtheorem{proposition}[theorem]{Proposition}
\theoremstyle{definition}
\newtheorem{definition}[theorem]{Definition}
\newtheorem{remark}[theorem]{Remark}
\newtheorem{example}[theorem]{Example}
\newcommand{\Aut}{\hbox{\rm Aut}}
\newcommand{\Sym}{{\rm Sym}}
\newcommand{\inv}{^{-1}}
\newcommand{\lin}{\overline}
\newcommand{\Cos}{\hbox{\rm Cos}}
\newcommand{\Cov}{\hbox{\rm Cov}}
\newcommand{\core}{\hbox{\rm core}}
\newcommand{\GC}{\hbox{\rm GenCov}}
\newcommand{\fib}{{\rm{fib}}}
\newcommand{\invG}{\mathop{{\rm inv}_\Gamma}}
\newcommand{\begP}{\mathop{{\rm beg}'}}
\newcommand{\invI}{\mathop{{\rm inv}_I}}
\newcommand{\invP}{\mathop{{\rm inv}'}}
\renewcommand{\inv}{\mathop{{\rm inv}}}
\DeclareMathOperator{\Inv}{{\rm inv}}
\DeclareMathOperator{\beg}{{\rm beg}}
\DeclareMathOperator{\term}{{\rm end}}
\newcommand{\id}{\hbox{id}}
\newcommand{\D}{{\rm D}}
\newcommand{\V}{{\rm V}}
\newcommand{\bh}{{\bar{h}}}
\numberwithin{equation}{section}
\numberwithin{figure}{section}
\tikzstyle{vertu}=[circle,draw=black!,fill=black!,thick]
\tikzstyle{vertv}=[circle,draw=black!,fill=white!,thick]
\tikzstyle{blank}=[circle,draw=white!,fill=white!,thick,inner sep = 0.11mm]
\tikzstyle{post}=[->,shorten >=1pt,>=latex,thick]
\tikzstyle{dentro}=[<-,>=latex,thick]
\title[]{Generalised voltage graphs}
\author{Primo\v{z} Poto\v{c}nik}
\author{Micael Toledo}
\address{Primo\v{z} Poto\v{c}nik, Faculty of Mathematics and Physics, University of Ljubljana, Jadranska 21, SI-1000 Ljubljana, Slovenia.\newline
\indent Also affiliated with: Institute of Mathematics, Physics and Mechanics, Jadranska 19, SI-1000 Ljubljana, Slovenia.
}
\email{primoz.potocnik@fmf.uni-lj.si}
\address{Micael Toledo, Institute of Mathematics, Physics and Mechanics, Jadranska 19, SI-1000.\newline
 Also affiliated with: University of Primorska, Faculty of Mathematics, Natural Sciences and Information Technologies, Glagolja\v{s}ka 8, SI-6000 Koper, Slovenia.}
\email{micael.toledo@imfm.uni-lj.si}
\thanks{The authors gratefully acknowledge support of the Slovenian Research Agency: Core Programme P1-0294, Research Project J1-1691 and the Young Researcher Scholarship programme.}
\begin{document}

\begin{abstract}A graph with a semiregular group of automorphisms can be thought of as the derived cover arising from a voltage graph. Since its inception, the theory of voltage graphs and their derived covers has been a powerful tool used in the study of graphs with a significant degree of symmetry. We generalise this theory to graphs with a group of automorphisms that is not necessarily semiregular, and we generalise several well-known results of the classical theory of voltage graphs to this broader setting.  
\end{abstract}
\maketitle

\section{Introduction}
\label{sec:intro}

Graphs (and mathematical objects in general) that posses non-trivial symmetry have many nice features, one of them being that they allow a more compact description, which not only saves space for storage but also enables more efficient analysis of the graph. Let us mention two typical examples of this phenomenon. 

Suppose that $\Gamma$ is a connected graph and $G$ a 
group of automorphisms of $\Gamma$.
If $G$ acts transitively on the arcs (ordered pairs of adjacent vertices) of $\Gamma$, then
$\Gamma$ can be reconstructed as a coset-graph from the group $G$, the vertex-stabiliser $G_v$ and an element $a\in G$ where $a$ is an element swapping the vertex $v$ with a neighbour of $v$;
see \cite[Section 1.2]{ConderRogla} or \cite{cosets}, for example.
Describing arc-transitive graphs as coset graphs
is a standard method often used in the classification results (see \cite{Cos1,Cos2,Cos3,Cos4,Cos5}, to name a few) as well as a way to store a graph in a database (see \cite{census}).

At the other extreme, if a group of automorphisms $G$ acts semiregularly on the vertices of $\Gamma$ (that is, $G_v = 1$ for every vertex $v\in \V(\Gamma)$) one can reconstruct $\Gamma$ as the derived covering graph $\Cov(\Gamma/G,\zeta)$, where  $\Gamma/G$ is the quotient graph and $\zeta \colon \D(\Gamma) \to G$  is a mapping called a voltage assignment. The theory of graph covers and their description in terms of
voltages has a long history going back to the work of Gross and Tucker \cite{gross,grosstuc}
and has now become one of the central tools in the theory of symmetries of graphs (see \cite{CovApp1,volt1,CovApp3,volt2, lift,elabcov,CovApp2}, for exmaple). 

In the case when $G$ is an arbitrary group of automorphisms of $\Gamma$
(not necessarily  semiregular or arc-transitive)
no similar method which encodes a complete information about $\Gamma$
has been described in the literature so far
even though the usefulness of such a potential method has been discussed
on several occasions (for example in \cite{macaj}).

The aim of this paper is to present such a method, which generalises both
the coset graph construction as well as the covering graph construction based
on the voltage assignments.

In what follows, we first introduce all the necessary notation needed
to present this construction and then state the main results of the paper. We begin with
 a definition of a {\em graph} as introduced in \cite{lift}, which proves to be the most suitable for our purposes (more terminology pertaining to this definition of a graph 
 will be given in Section~\ref{sec:def}).

A {\em graph} is a quadruple $(V,D,\beg,\inv)$ where $V$ is a non-empty set of {\em vertices}, $D$ is a set of {\em darts} (also known as {\em arcs} in the literature), $\beg\colon D \to V$ is a function assigning to each dart $x$ its {\em initial vertex} $\beg x$, and
$\inv\colon D \to D$ is a function assigning to each dart $x$ its {\em inverse dart} $\inv x$ (also denoted $x^{-1}$ when there is no danger for  ambiguity) satisfying 
$\inv\inv x =x$ 
 for every dart $x$.
 If $\Gamma = (V,D,\beg,\inv)$ is a graph, then we let
$\V(\Gamma):=V$, $\D(\Gamma) := D$, $\beg_\Gamma:=\beg$ and $\inv_\Gamma:=\inv$.

For a group $G$, we let $S(G)$ denote the set of all subgroups of $G$ and for $g,h\in G$, we let $g^h:=h^{-1}gh$ be the conjugate of $g$ by $h$.

 We can now present the construction which generalises that of the derived covering graph
 introduced in the classical work of Gross and Tucker  \cite[Section 2.1.1]{gross}.

\begin{definition}
Let $\Delta$ be a connected graph, let $G$ be a group, and 
let $\omega\colon \V(\Delta) \cup \D(\Delta) \to S(G)$ and $\zeta\colon \D(\Delta) \to G$ be two functions such that the following hold for all $x \in \D(\Delta)$:
\begin{align}
\label{eq:volt1} \omega(x) \leq  \omega(\beg_\Delta x);\\ 
\label{eq:volt2} \omega(x) = \omega(x^{-1})^{\zeta(x)};\\
\label{eq:volt3} \zeta(x^{-1})\zeta(x) \in \omega(x). 	
\end{align}
We then say that the quadruple $(\Delta,G,\omega,\zeta)$ is a {\em generalised voltage graph} and we call the functions $\omega$ and $\zeta$ a {\em weight function} and a {\em voltage assignment}, respectively. 
\end{definition}

\begin{definition}
\label{def:gencov}
Let $(\Delta,G,\omega,\zeta)$ be a generalised voltage graph and let $\Gamma$ be the graph defined by:
\begin{itemize}
\item $\V(\Gamma) = \{(v,\omega(v)g) \mid g \in G, v \in \V(\Delta)\}$;
\item $\D(\Gamma) = \{(x,\omega(x)g) \mid g \in G, x \in \D(\Delta)\}$;
\item $\beg_{\Gamma}(x, \omega(x)g)=(\beg_{\Delta} (x), \omega(\beg_{\Delta} x)g)$;
\item $\inv_{\Gamma}(x, \omega(x)g)=(\inv_\Delta x, \omega(\inv_\Delta x)\zeta(x) g)$.
\end{itemize}
Then $\Gamma$ is called the {\em generalised cover} arising from $(\Delta,G,\omega,\zeta)$ and is denoted by $\GC(\Delta,G,\omega,\zeta)$.
\end{definition}

One should of course check that the functions $\beg_\Gamma$ and $\inv_\Gamma$ from Definition~\ref{def:gencov} are well defined and that $\inv_\Gamma$ is indeed an involution on $\D(\Gamma)$.
We do that in Lemma~\ref{lem:OK} in Section~\ref{sec:def}. Two explicit examples illustrating this construction are presented in Example~\ref{ex:S6}.

In order to formulate a result (Theorem~\ref{the:lift} below) which can be seen as the main motivation for the introduction of the generalised voltage graphs, we need to introduce the concept  a quotient graph arising from an action of a group of automorphisms.
For a group $G$ acting on a set $\Omega$ we let $\Omega/G$ denote the
set of orbits of this action; that is $\Omega/G = \{x^G : x \in \Omega\}$.
A subset of $\Omega$ which contains precisely one element from each orbit in $\Omega/G$ is called a {\em transversal} of $\Omega/G$.

\begin{definition}
Let $\Gamma=(V,D,\beg,\Inv)$ be a graph and let $G \leq \Aut(\Gamma)$. 
Let $\beg' \colon D/G \to V/G$ and $\inv'\colon D/G \to D/G$ be mappings defined by
$\beg'(d^{G})= (\beg d)^{G}$ and $\Inv'd^G= (\Inv d)^G$. Then we call the
graph $(V/G,D/G,\beg',\Inv')$  the {\em $G$-quotient} of $\Gamma$ and denote it by $\Gamma/G$. The mapping $f:\V(\Gamma) \cup \D(\Gamma) \to \V(\Gamma/G) \cup \D(\Gamma/G)$ given by $f(x) = x^G$, is called the {\em quotient map} associated with $\Gamma$ and $G$. 
\end{definition}

\begin{theorem}
\label{the:lift}
Let $\Gamma$ be a graph and let $G \leq \Aut(\Gamma)$. Then there exist functions $\zeta \colon \D(\Gamma/G) \to G$ and $\omega \colon \V(\Gamma/G) \cup \D(\Gamma/G) \to S(G)$, such that $(\Gamma/G,G,\omega,\zeta)$ is a generalised voltage graph and $\Gamma$ is isomorphic to the associated generalised covering graph $\GC(\Gamma/G,G,\omega,\zeta)$.
\end{theorem}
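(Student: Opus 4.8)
The plan is to manufacture the data $(\omega,\zeta)$ from a single careful choice of orbit representatives and then to write down an explicit isomorphism; the three voltage axioms will then fall out of elementary facts about stabilisers and the action, and the isomorphism will be engineered so that its compatibility with $\inv$ holds by the very definition of $\zeta$. First I would fix a transversal $T_V\subseteq\V(\Gamma)$ of the vertex orbits and, for each vertex $v$, write $[v]\in T_V$ for the chosen representative of $v^G$. For the darts I would choose a transversal $T_D$ with the extra compatibility $\beg_\Gamma[x]=[\beg_\Gamma x]$ for every dart $x$; this is always achievable, since for any $d\in x^G$ one has $\beg_\Gamma d=[\beg_\Gamma x]^{\,h}$ for some $h\in G$, and then $d^{\,h^{-1}}\in x^G$ begins at $[\beg_\Gamma x]$ because every $g\in G$ is an automorphism and hence commutes with $\beg_\Gamma$. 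With the representatives fixed I would set $\omega(v^G):=G_{[v]}$ and $\omega(x^G):=G_{[x]}$ (the stabilisers in $G$), and define $\zeta(x^G)$ to be any $g\in G$ with $\inv_\Gamma[x]=[\inv_\Gamma x]^{\,g}$. Such a $g$ exists because $\inv_\Gamma[x]$ lies in the orbit $(\inv_\Gamma x)^G$, and it is pinned down only modulo the coset $G_{[\inv x]}\,g$, which is exactly the ambiguity the cover construction absorbs.

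Next I would verify the three axioms. Axiom \eqref{eq:volt1} is immediate: since $\beg_\Gamma[x]=[\beg_\Gamma x]$, the subgroup $G_{[x]}$ fixes $[\beg_\Gamma x]$, so $\omega(x^G)=G_{[x]}\le G_{[\beg x]}=\omega(\beg'(x^G))$. For \eqref{eq:volt2} I would use two standard facts: that $\inv_\Gamma$ is a $G$-map, whence $G_{[x]}=G_{\inv_\Gamma[x]}$, and that stabilisers transform by conjugation, $G_{s^g}=(G_s)^{g}$. Writing $y=[\inv_\Gamma x]$ and $g=\zeta(x^G)$ so that $\inv_\Gamma[x]=y^{\,g}$, these give $\omega(x^G)=G_{[x]}=G_{\inv_\Gamma[x]}=G_{y^{g}}=(G_y)^{g}=\omega(x^{-1})^{\zeta(x)}$. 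For \eqref{eq:volt3} I would apply $\inv_\Gamma$ to the defining relation of $\zeta$: from $\inv_\Gamma[x]=y^{\,g}$ and the analogous relation $\inv_\Gamma y=[x]^{\,\zeta(x^{-1})}$ (using $\inv_\Gamma\inv_\Gamma x=x$), one gets $[x]=\inv_\Gamma(y^{g})=(\inv_\Gamma y)^{g}=[x]^{\,\zeta(x^{-1})\zeta(x)}$, so $\zeta(x^{-1})\zeta(x)\in G_{[x]}=\omega(x^G)$, as required.

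Finally, writing $\Gamma':=\GC(\Gamma/G,G,\omega,\zeta)$, I would define $\Phi\colon\Gamma'\to\Gamma$ by $\Phi(v^G,G_{[v]}g)=[v]^{\,g}$ on vertices and $\Phi(x^G,G_{[x]}g)=[x]^{\,g}$ on darts. Well-definedness and bijectivity are clean: $G_{[v]}g\mapsto[v]^{\,g}$ is precisely the canonical bijection from the right cosets of the stabiliser onto the orbit $v^G$, and likewise for darts, so $\Phi$ is a bijection fibrewise and hence globally on vertices and on darts. Compatibility with $\beg$ is a direct unwinding of the definitions using $[\beg_\Gamma x]=\beg_\Gamma[x]$. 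The delicate point — and the step I expect to be the main obstacle — is compatibility with $\inv_\Gamma$, since this is where $\zeta$ enters: one computes $\Phi(\inv_{\Gamma'}(x^G,G_{[x]}g))=\Phi\big((\inv x)^G,\,G_{y}\,\zeta(x^G)g\big)=y^{\,\zeta(x^G)g}$, and this equals $\inv_\Gamma([x]^{\,g})=(\inv_\Gamma[x])^{\,g}$ exactly because $y^{\,\zeta(x^G)}=\inv_\Gamma[x]$ by the defining property of $\zeta$. Thus the identity holds on the nose, by design. Well-definedness of $\inv_{\Gamma'}$ itself and the fact that it is an involution are furnished by Lemma~\ref{lem:OK} together with \eqref{eq:volt2} and \eqref{eq:volt3}, so no separate argument is needed there. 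Assembling these checks shows $\Phi$ is an isomorphism, completing the proof.
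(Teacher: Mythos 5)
Your proposal is correct and follows essentially the same route as the paper's proof of Theorem~\ref{the:lift2}: the same compatible pair of transversals (with $\beg_\Gamma[x]=[\beg_\Gamma x]$), the same weight function $\omega(x^G)=G_{[x]}$, the same defining condition on $\zeta$ (your $\inv_\Gamma[x]=[\inv_\Gamma x]^{\zeta(x^G)}$ is exactly the paper's condition~(\ref{eq:zetacondition})), the same stabiliser identities to verify (\ref{eq:volt1})--(\ref{eq:volt3}), and the same explicit isomorphism $(x^G,G_{[x]}g)\mapsto[x]^g$. The verification details, including the orbit--stabiliser argument for well-definedness and the observation that compatibility with $\inv$ holds by the defining property of $\zeta$, also match the paper's argument.
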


The above theorem is proved in Section~\ref{sec:reconstruction}. In fact,
there we state and prove a more detailed version (see Thereom~\ref{the:lift2}), where
more information is given on the weight function $\omega$ and the generalised voltage assignment
$\zeta$.

Let us now consider three special cases of generalised covers. If $G$ acts semiregularly on $\V(\Gamma)$ then the weights $\omega(x)$ for $x \in \V(\Gamma) \cup \D(\Gamma)$ appearing in Theorem \ref{the:lift} are all trivial (as explicitly stated in Theorem \ref{the:lift2}). Consequently the generalised voltage assignment $\zeta$ satisfies the condition $\zeta(x^{-1})=\zeta(x)^{-1}$ for all $x \in \D(\Gamma)$ and can thus be viewed as a voltage assignment as defined in \cite{lift}. The generalised covering graph $\GC(\Gamma,G,\omega,\zeta)$ then coincides with the derived graph $(\Gamma /G)^{\zeta}$ as defined in \cite[Section 2.1.1]{gross} and Theorem \ref{the:lift} can be viewed as a generalisation of the classical result \cite[Theorem 2.2.2]{gross} of Gross and Tucker.
 
The second special case which we want to point out is when $G$ acts transitively
on the arcs and vertices of $\Gamma$. In this case the quotient $\Gamma/G$ is a vertex with a single semi-edge attached to it. Let $v$ be an arbitrary vertex of $\Gamma$ and let $x$ be a dart emanating from $v$. As can be deduced from Theorem \ref{the:lift2} the weight function $\omega$ appearing in Theorem \ref{the:lift} is given by $\omega(v^G)=G_v$ and $\omega(x)=G_x$. Furthermore $\zeta(x^G)$ is an element $a\in G$ swapping $x$ with its inverse $x^{-1}$. As one
can easily see, the generalised covering construction with $(\Gamma/G,G,\omega,\zeta)$ is then essentially the same as the coset graph construction (see \cite[Section 1.2]{ConderRogla}, for example).

Finally, suppose that $G$ acts locally arc-transitively (that is, for every vertex $v$ the stabiliser $G_v$ acts transitively on the arcs emanating from $v$) but intransitively on the vertices of $\Gamma$.
For the sake of simplicity we also assume that $\Gamma$ has no isolated vertices (that is, that every vertex has an arc emanating from it).
These graphs have been extensively studied (see \cite{locallyAT} for a nice overview of the topic).
One can easily see that then $G$ acts transitively on the edges and that it has precisely two orbits on the vertices. In particular,  the quotient $\Gamma/G$ is isomorphic to $K_2$ (the graph with two vertices and a single edge connecting them). Let $u$ and $v$ be two adjacent vertices of $\Gamma$
and let $x$ be a dart with $\beg x = v$ and $\term x = u$. Then, in view of
Theorem~\ref{the:lift2}, the function $\omega$  featuring in Theorem~\ref{the:lift} satisfies $\omega(v^G) = G_v$, $\omega(u^G) = G_u$
and $\omega(x^G) = \omega((x^{-1})^G) = G_{u}\cap G_v$. Furthermore the function
$\zeta$ can be chosen to be trivial on both darts of $\Gamma/G$. The generalised covering construction applied to such a data is then essentially the same as the
construction now known as the bicoset construction (see \cite[Definition 2.1]{DuXu}, or see \cite{locallyAT}, where
this construction is called the {\em coset graph construction}, or see \cite[p.\ 380]{Gold80} for a historical reference, where
the resulting graph was called the {\em graph of the completion} of an amalgam).

When analysing the connectivity properties of the generalised covers (see Section~\ref{sec:con}), we obtained, as a byproduct,
Theorem \ref{the:genGv}, which also generalises some well-known facts from algebraic graph theory. We do not state Theorem \ref{the:genGv} here, as we have not yet set the necessary terminology to do so. Instead, we state a simpler but interesting special case.

\begin{proposition}
\label{prop:con}
Let $\Gamma$ be a graph, let $G \leq \Aut(\Gamma)$ be such that $\Gamma / G$ is a tree and let $\Gamma'$ be a subgraph of $\Gamma$ that is mapped isomorphically to $T$ by the quotient map. Then, $\Gamma$ is connected if and only if $G = \langle   G_v \mid v \in \V(\Gamma') \rangle$.
\end{proposition}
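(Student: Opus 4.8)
Write $T=\Gamma/G$ and let $f$ denote the quotient map, so that the vertices $v_1,\dots,v_n$ of $\Gamma'$ form a transversal of $\V(\Gamma)/G$ and the darts of $\Gamma'$ form a transversal of $\D(\Gamma)/G$; set $H=\la G_v \mid v\in \V(\Gamma')\ra$ and let $C$ be the connected component of $\Gamma$ containing the (connected) subgraph $\Gamma'$. My plan is to base the whole argument on the single structural identity
\begin{equation}
\label{eq:compclaim}
\V(C)=\{\,v_i^{\,h} : 1\le i\le n,\ h\in H\,\},
\end{equation}
that is, the vertex set of $C$ is exactly the $H$-sweep of the base vertices. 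I would first explain why \eqref{eq:compclaim} already yields both implications, and then how to establish \eqref{eq:compclaim} itself.

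Granting \eqref{eq:compclaim}, the equivalence falls out by counting orbits. Because each $v_i$ is one of the generating vertices we have $G_{v_i}\le H$, and since the $v_i$ meet every $G$-orbit on $\V(\Gamma)$ one checks that $\{v_i^{\,h}:i,h\in H\}=\V(\Gamma)$ holds if and only if $H=G$: indeed, for fixed $i$, the relation $\{v_i^{\,h}:h\in H\}=v_i^{\,G}$ forces, for every $g\in G$, some $h\in H$ with $gh^{-1}\in G_{v_i}\le H$, whence $g\in H$. Thus if $\Gamma$ is connected then $C=\Gamma$, so $\V(\Gamma)=\{v_i^{\,h}\}$ by \eqref{eq:compclaim} and $H=G$; conversely, if $H=G$ then \eqref{eq:compclaim} gives $\V(C)=\V(\Gamma)$, and as every dart of $\Gamma$ joins two vertices of $C$ it lies in $C$, so $C=\Gamma$ is connected.

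It remains to prove \eqref{eq:compclaim}. For the inclusion $\supseteq$ the key observation is that every generator $s\in G_{v_j}$ fixes the vertex $v_j\in \V(\Gamma')\subseteq \V(C)$, and an automorphism fixing a vertex of $C$ must map the component $C$ onto itself; hence $C^{s}=C$ for every generator and therefore $C^{h}=C$ for all $h\in H$, giving $v_i^{\,h}\in C^{h}=C$. The reverse inclusion $\subseteq$ is the heart of the matter: I would show that the set $W:=\{v_i^{\,h}:i,h\in H\}$ is closed under adjacency, so that it is a union of components and, containing the connected $\Gamma'$, must contain $\V(C)$. Here is the mechanism. Let $x_1,\dots,x_d$ be the darts of $\Gamma'$ emanating from $v_i$; since $f$ maps $\Gamma'$ isomorphically onto $T$, these represent all $G$-orbits of darts whose initial vertex lies in $v_i^{\,G}$. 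Consequently every dart $z$ of $\Gamma$ with $\beg z=v_i$ equals $x_k^{\,a}$ for some $k$ and some $a\in G_{v_i}\le H$, the constraint $\beg(x_k^{\,a})=v_i$ forcing $a\in G_{v_i}$. Now take $w=v_i^{\,h}\in W$ and a dart $y$ with $\beg y=w$; then $y^{\,h^{-1}}$ emanates from $v_i$, so $y^{\,h^{-1}}=x_k^{\,a}$ with $a\in G_{v_i}$, whence $\term y=(\term x_k)^{\,ah}$ with $ah\in H$. As $\term x_k$ is again a vertex of $\Gamma'$, the neighbour $\term y$ lies in $W$, proving closure.

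The main obstacle is precisely this local-structure step in the inclusion $\subseteq$: one must argue carefully that $\Gamma'$ supplies exactly one representative for each $G$-orbit of darts leaving $v_i^{\,G}$, and that the only darts of $\Gamma$ leaving $v_i$ within a fixed orbit are the $G_{v_i}$-images of the corresponding $x_k$. This is what funnels all the ``local'' freedom into the stabilisers $G_{v_i}$, and hence into $H$; the tree hypothesis enters through the fact that $\term x_k$ is again a vertex of the transversal $\Gamma'$, so that no dart can escape $W$. I expect the remaining bookkeeping (well-definedness of the representatives, the orbit--stabiliser count, and the passage between a dart and its inverse inside $\Gamma'$) to be routine once this step is in place; alternatively, \eqref{eq:compclaim} can be read off from the connectivity analysis of generalised covers in Section~\ref{sec:con} via Theorem~\ref{the:lift}, but the direct argument above seems the most transparent.
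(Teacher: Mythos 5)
Your proof is correct, but it takes a genuinely different route from the paper's. The paper obtains Proposition~\ref{prop:con} as the special case of Theorem~\ref{the:genGv} in which $\Gamma/G$ is a tree: the set $\mathcal{T}=\V(\Gamma')\cup\D(\Gamma')$ is then a connected transversal whose dart part is closed under inversion (so $\mathcal{T}_D=\mathcal{T}_D^\circ$ and no generators $a_x$ are needed), and Theorem~\ref{the:genGv} itself is proved through the full generalised-voltage machinery: reconstruction (Theorem~\ref{the:lift2}), normalisation over a spanning tree (Theorem~\ref{lem:normalized}), walk lifting (Lemmas~\ref{lem:endvertex} and~\ref{lem:gen}), and the connectivity criterion for generalised covers (Theorem~\ref{lem:connect}). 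You instead work entirely inside $\Gamma$: your identity $\V(C)=\{v_i^{h} : 1\le i\le n,\ h\in H\}$ is proved by showing that the $H$-sweep of the transversal is closed under adjacency, the crucial local step being that every dart of $\Gamma$ emanating from $v_i$ is a $G_{v_i}$-translate of one of the transversal darts $x_k$ (correct, because $f\vert_{\Gamma'}$ is a bijection on dart orbits and $\beg(x_k^{a})=v_i$ forces $a\in G_{v_i}$), and the equivalence then follows from your orbit--stabiliser computation (the $H$-sweep exhausts $\V(\Gamma)$ iff $H=G$). The remaining delicate points also check out: $\term x_k\in\V(\Gamma')$ because a subgraph's dart set is closed under $\inv$, and an automorphism fixing a vertex of $C$ preserves $C$, which gives the easy inclusion. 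As for what each approach buys: your argument is elementary and self-contained, bypassing Sections~\ref{sec:reconstruction}, \ref{sec:normal} and \ref{sec:con} entirely; in fact you never use the tree hypothesis beyond connectivity of $\Gamma/G$ and inversion-closure of $\D(\Gamma')$, so your proof actually establishes the statement for any connected quotient admitting an isomorphically embedded copy $\Gamma'$ in $\Gamma$. The paper's route is heavier but yields the strictly more general Theorem~\ref{the:genGv}, which also covers quotients with no such embedded copy, where the extra generators $a_x$ attached to darts in $\mathcal{T}_D\setminus\mathcal{T}_D^\circ$ genuinely matter.
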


A well-known
 special case of the above proposition occurs when
 the quotient $\Gamma/G$ is isomorphic
to $K_2$. As we described above, $\Gamma$ is then isomorphic to the bicoset graph arising from
$G$ and the stabilisers $G_v$ and $G_u$ of two adjacent vertices.
Proposition~\ref{prop:con} then claims that
 $\Gamma$ is connected if and only if
$G = \langle G_v, G_u\rangle$ (see \cite[Lemma 3.7]{locallyAT}).

Another well-known special case of Theorem~\ref{the:genGv} occurs when $G$ acts transitively
on the arcs of $\Gamma$. As mentioned above, the quotient is a vertex with a single semi-edge attached and the generalised cover isomorphic to $\Gamma$ is essentially the coset
graph $\Cos(G_v,a)$ where $a$ is an element of $G$ inverting an arc emanating from a vertex $v$.
Theorem~\ref{the:genGv} then asserts that $\Gamma$ is connected if and only if
$G=\langle G_v, a\rangle$ (see \cite[Lemma 2.1]{ConderRogla}).

In Section~\ref{sec:def},
we present some further definitions. Section~\ref{sec:reconstruction} is devoted to the
proof of Theorem~\ref{the:lift} In Section~\ref{sec:auto} we investigate the action of $G$ on the generalised cover $\GC(\Delta,G,\omega,\zeta)$ and we show some natural isomorphisms between generalised covers. In Section~\ref{sec:normal} we give a generalisation of a well-known result stating that every voltage assignment can be thought of as being trivial on a prescribed spanning tree of a voltage graph. Section~\ref{sec:con} is devoted to a characterisation of connectivity of a generalised cover and we prove a more general version of Proposition \ref{prop:con}. In Section~\ref{sec:simp} we give necessary and sufficient conditions for a generalised cover to be a simple graph.

\section{Further definitions and basic results}
\label{sec:def}

In this section we give some further definitions regarding the generalised voltage graphs and generalised covers, provide some examples and prove some basic results. 

\subsection{Graphs}

We begin by setting  terminology regarding graphs, as defined in Section~\ref{sec:intro}.
Let $\Gamma=(V,D,\beg,\inv)$ be a graph. For $x\in D$, we call the vertex $\beg x^{-1}$ 
the {\em end} of $x$ and denote it by $\term x$.
 Two darts $x$ and $y$ are {\em parallel} if $\beg x = \beg y$ and $\term x = \term y$. 
An {\em edge} of a graph $\Gamma$ is a pair $\{x,x^{-1}\}$ where $x$ is a dart of $\Gamma$.
The vertices $\beg x$ and $\beg x^{-1}$ are then called the {\em endvertices} of the edge. 
An edge $\{x,x^{-1}\}$ is a {\em semi-edge} if $x=x^{-1}$ and is a {\em loop} if $x \neq x^{-1}$ but $\beg x = \term x$.

Two distinct edges $e$ and $e'$ are said to be {\em parallel} if there is a dart in $e$ that is parallel to a dart in $e'$, or equivalently, if $e$ and $e'$ have the same endvertices.
 A graph that has no semi-edges, no loops and no pairs of distinct parallel edges
is {\em simple}. Note that a simple graph is uniquely determined by its vertex-set and its edge-set and the usual terminology of simple graphs applies. In particular, a dart in a simple graph is usually called an {\em arc}.

The {\em neighbourhood}  of a vertex $v\in \V(\Gamma)$ is defined as the set $\Gamma(v):=\{x \in \D(\Gamma) : \beg x = v\}$ and the cardinality of $\Gamma(v)$ is called the {\em valence} of $v$; if $x\in \Gamma(v)$, we also say that $x$ emanates from $v$.
 
If $\Gamma:=(V,D,\beg,\inv)$  is a graph, then we say $\Gamma':=(V',D',\beg',\inv')$ is a {\em subgraph} of $\Gamma$ (and we write $\Gamma' \leq \Gamma$) if $V' \subset V$, $D' \subset D$ and the functions $\beg'$ and $\inv'$ are the respective restrictions of $\beg$ and $\inv$ to $D'$. If additionally $V' = V$ then we say $\Gamma'$ is a {\em spanning subgraph} of $\Gamma$. 

A {\em walk} of {\em length} $n$ is a sequence $(x_1,x_2,\ldots,x_n)$ where $x_i\in \D(\Gamma)$ for all $i\in \{1,\ldots,n\}$, and $\term x_i = \beg x_{i+1}$ for all $i \in \{1,\ldots,n-1\}$. 
If $\beg x_1 = u$ and $\term x_n =v$, then we say $u$ is the {\em initial vertex}, $v$ is the {\em final vertex} and the walk is called a $uv$-walk. A walk of length $0$ is defined as an empty sequence of darts and can be viewed as a $vv$-walk for every vertex $v$.
The {\em inverse} of a walk $W=(x_1,x_2,\ldots,x_n)$ is the walk $W^{-1}=(x_n^{-1},x_{n-1}^{-1},\ldots,x_1^{-1})$. If $W_1$ and $W_2$ are two walks, then we denote by $W_1W_2$ the {\em concatenation} of both sequences. Note that this is well defined only when the final vertex of $W_1$ is equal to the initial vertex of $W_2$. A walk $W$ is {\em reduced} if it contains no two consecutive darts that are inverse to each other. A walk $(x_1,x_2,\ldots,x_n)$ is a {\em path} if $\beg x_i \neq \beg x_j$ for all $0 \leq i,j \leq n$; a {\em closed walk} if $\beg x_1 = \term x_n$ and a {\em cycle} if it is a closed walk and $\beg x_i \neq \beg x_j$ for all  $0 \leq i,j \leq n$. A graph is connected if for any two vertices $u$ and $v$, there exists a $uv$-walk. A connected graph that contains no cycles is a {\em tree}. Note that a tree is always a simple graph.

A {\em morphism} of graphs $\varphi \colon \Gamma \to \Delta$
 is a function $\varphi: \V(\Gamma) \cup \D(\Gamma) \to \V(\Delta) \cup \D(\Delta) $ that maps vertices to vertices, darts to darts and such that 
 $\varphi( \inv _\Gamma x) = \inv_\Delta \varphi(x)$  and
$\varphi(\beg_\Gamma x) = \beg_\Delta \varphi(x)$.
Due to the latter condition, a morphism of graphs without isolated vertices (vertices of degree $0$)
is uniquely determined by its restriction to the set of darts; we will often exploit this fact and
define a morphism on darts only.

A surjective morphism  is called an {\em epimorphism} and a bijective morphism is called an {\em isomorphism}. An isomorphism of a graph onto itself is called an {\em automorphism}.
 
\subsection{Generalised covering graphs}
 
We will now discuss the generalised covering graphs.
We first prove that Definition~\ref{def:gencov} indeed yields a graph.

\begin{lemma}
\label{lem:OK}
Assume the notion from Definition~\ref{def:gencov}. Then
the functions $\beg_{\Gamma}$ and $\inv_{\Gamma}$ are well defined and $\inv_{\Gamma} \inv_{\Gamma}   X= X$ for every dart $X$ of $\Gamma$.
\end{lemma}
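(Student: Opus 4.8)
The plan is to observe that the only ambiguity in Definition~\ref{def:gencov} comes from the choice of coset representative: a vertex $(v,\omega(v)g)$ or a dart $(x,\omega(x)g)$ determines its first coordinate uniquely, while the second coordinate $\omega(x)g$ is a right coset that can be written with many representatives. Hence proving $\beg_\Gamma$ and $\inv_\Gamma$ well defined amounts to checking that the defining formulas do not depend on the representative $g$; throughout I would use that $\omega(x)g=\omega(x)g'$ holds precisely when $g(g')^{-1}\in\omega(x)$.

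For $\beg_\Gamma$, I would assume $\omega(x)g=\omega(x)g'$, so $g(g')^{-1}\in\omega(x)$, and then invoke \eqref{eq:volt1} to promote this to $g(g')^{-1}\in\omega(\beg_\Delta x)$, giving $\omega(\beg_\Delta x)g=\omega(\beg_\Delta x)g'$; thus the two outputs of $\beg_\Gamma$ coincide. For $\inv_\Gamma$ the same hypothesis again gives $g(g')^{-1}\in\omega(x)$, and here I would use \eqref{eq:volt2} in the unwound form $\omega(x)=\zeta(x)^{-1}\omega(x^{-1})\zeta(x)$ (recalling the convention $a^b=b^{-1}ab$) to conclude $\zeta(x)g(g')^{-1}\zeta(x)^{-1}\in\omega(x^{-1})$, which is exactly $\omega(\inv_\Delta x)\zeta(x)g=\omega(\inv_\Delta x)\zeta(x)g'$, so the outputs agree.

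Finally, for the involution property I would simply compute, writing $X=(x,\omega(x)g)$:
\[
\inv_\Gamma X=(x^{-1},\omega(x^{-1})\zeta(x)g),
\qquad
\inv_\Gamma\inv_\Gamma X=(x,\omega(x)\zeta(x^{-1})\zeta(x)g),
\]
using $\inv_\Delta\inv_\Delta x=x$ in the second step (and noting that the second application is legitimate since $\inv_\Gamma$ was just shown to be representative-independent). This equals $X$ exactly when $\omega(x)\zeta(x^{-1})\zeta(x)g=\omega(x)g$, that is when $\zeta(x^{-1})\zeta(x)\in\omega(x)$, which is precisely condition \eqref{eq:volt3}.

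The computations are all routine once the bookkeeping is fixed, and the three defining conditions are used one apiece: \eqref{eq:volt1} for $\beg_\Gamma$, \eqref{eq:volt2} for $\inv_\Gamma$, and \eqref{eq:volt3} for the involution. The only genuine point of care, and the place where a sign error could slip in, is keeping the conventions straight—these are right cosets, so $\omega(x)g=\omega(x)g'$ reads $g(g')^{-1}\in\omega(x)$, and the conjugation in \eqref{eq:volt2} is $a^b=b^{-1}ab$, so it must be applied in the correct order when transporting the containment between $\omega(x)$ and $\omega(x^{-1})$.
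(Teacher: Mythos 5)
Your proposal is correct and follows essentially the same route as the paper's proof: well-definedness of $\beg_\Gamma$ via \eqref{eq:volt1}, well-definedness of $\inv_\Gamma$ via \eqref{eq:volt2} transported by conjugation with $\zeta(x)$, and the involution property via \eqref{eq:volt3}. Your explicit care with the right-coset convention and the conjugation order matches exactly the computation $(gh^{-1})^{\zeta(x)^{-1}}\in\omega(x)^{\zeta(x)^{-1}}=\omega(x^{-1})$ carried out in the paper.
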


\begin{proof}
Suppose that for some dart $x\in \D(\Delta)$ and $g,h\in G$, we have $\omega(x)g = \omega(x)h$. Then $gh^{-1} \in \omega(x) \leq \omega(\beg_{\Delta} x)$, hence $\omega(\beg x)g = \omega(\beg x)h$. This shows that the value $\beg_\Gamma(x,\omega(x)g)$
is independent of the choice of the representative $g$ of the coset $\omega(x)g$ and hence
$\beg_\Gamma$ is a well-defined function on $\D(\Gamma)$.

Similarly $(\zeta(x)g)(\zeta(x)h)^{-1} = (gh^{-1})^{\zeta(x)^{-1}} \in \omega(x)^{\zeta(x)^{-1}} = \omega(x^{-1})$. Therefore $\omega(x^{-1})\zeta(x)g = \omega(x^{-1})\zeta(x)h$, implying that $\inv_\Gamma$ is a well-defined function on $\D(\Gamma)$.

Finally let us show that $\inv_{\Gamma}$ is an involution. Let $X:= (x, \omega(x)g)$ be an
arbitrary dart of $\Gamma$. Then
$$
\Inv_{\Gamma} \Inv_{\Gamma} X = \Inv_{\Gamma} (\Inv_{\Gamma} (x, \omega(x)g)) =
 \Inv_{\Gamma} (x^{-1},  \omega(x^{-1})\zeta(x)  g) =
 $$
 $$
= (x, \omega(x)\zeta(x^{-1}) \zeta(x) g )
= (x, \omega(x)g).
$$
\end{proof}

To illustrate the generalised covering construction we provide two simple examples, both with
the base graph $\Delta$ being the complete graph on two vertices and the group $G$ being
the symmetric group $S_6$.

\begin{example}
\label{ex:S6}
Consider the symmetric group $\mathcal{S}_6$ and set $\sigma = (123)(546)$ and $\rho = (23)(45)$. Let $G = \langle \sigma, \rho \rangle$, $H = \langle \rho \rangle$ and $K = \langle \rho\sigma \rangle$. Let $\Delta$ be a graph consisting of two vertices $u$ and $v$ joined by a single edge between them with $\beg x = u$ and $\beg y = v$. Let $\omega$ be a weight function for $\Delta$ given by $\omega(u)=H$, $\omega(v)=K$, $\omega(x)=\omega(y)={1}$. Let $\zeta$ be the voltage assignment given by $\zeta(x)=\sigma$ and $\zeta(y)=\sigma^2$. Then $(\Delta,G,\omega,\zeta)$ is a voltage graph and its generalised cover is isomorphic to three pairs of parallel edges (see Figure \ref{fig:examples}, right). In contrast, the generalised cover of $(\Delta,G,\omega,\zeta')$ where $\zeta$ assigns trivial voltage to both darts $x$ and $y$ is isomorphic to a cycle of length $6$ (see Figure \ref{fig:examples}, left). 
\begin{figure}[h!]
\label{fig:examples}
\resizebox{\textwidth}{!}{
\begin{tabular}{ccc}
\begin{tikzpicture}
[auto, inner sep=0.5mm, label distance = 1mm]

\node (5) at ( 30:2cm) [vertv] {};
\node (1) at ( 90:2cm) [vertu] {};
\node (4) at ( 150:2cm) [vertv] {};
\node (3) at ( 210:2cm) [vertu] {};
\node (6) at (270:2cm) [vertv] {};
\node (2) at (330:2cm) [vertu] {};

\node (a) at ( 60:1.73cm) [] {}
edge [dentro] node{\tiny $(y,\{1\})$} (5)
edge [dentro] node[swap]{\tiny $(x,\{1\})$} (1);
\node (b) at ( 120:1.73cm) [blank] {}
edge [dentro] node{\tiny $(x,\{1\}\rho)$} (1)
edge [dentro] node[swap]{\tiny $(y,\{1\}\rho)$}(4);
\node (c) at ( 180:1.73cm) [blank] {}
edge [dentro] node[ yshift = -1mm]{\tiny $(y,\{1\}\sigma^2)$} (4)
edge [dentro] node[swap, yshift=1mm]{\tiny $(x,\{1\}\sigma^2)$} (3);
\node (d) at ( 240:1.73cm) [blank] {}
edge [dentro] node{\tiny $(x,\{1\}\rho\sigma^2)$} (3)
edge [dentro] node[swap]{\tiny $(y,\{1\}\rho\sigma^2)$} (6);
\node (e) at (300:1.73cm) [blank] {}
edge [dentro] node{\tiny $(y,\{1\}\sigma)$} (6) 
edge [dentro] node[swap]{\tiny $(x,\{1\}\sigma)$} (2);
\node (f) at (360:1.73cm) [blank] {}
edge [dentro] node[yshift=1mm]{\scriptsize $(x,\{1\}\rho\sigma)$} (2) 
edge [dentro] node[swap, yshift = -1mm]{\tiny $(y,\{1\}\rho\sigma)$} (5);

\node [above, yshift=1mm] at (1) {$(u,H)$};
\node [right, xshift = 1mm] at (5) {$(v,K)$};
\node [right, xshift = 1mm] at (2) {$(u,H\sigma)$};
\node [below, yshift = -1mm] at (6) {$(v,K\sigma)$};
\node [left, xshift = -1mm] at (3) {$(u,H\sigma^2)$};
\node [left, xshift = -1mm] at (4) {$(v,K\sigma^2)$};

\end{tikzpicture}
&&
\begin{tikzpicture}
[auto, inner sep=0.5mm, bend angle = 15]

\node (5) at ( 30:2cm) [vertv] {};
\node (1) at ( 90:2cm) [vertu] {};
\node (4) at ( 150:2cm) [vertv] {};
\node (3) at ( 210:2cm) [vertu] {};
\node (6) at (270:2cm) [vertv] {};
\node (2) at (330:2cm) [vertu] {};

\node (a) at ( 60:1.53cm) [blank] {}
edge [dentro, bend right] node[swap]{\tiny $(y,\{1\}\sigma\rho)$} (5)
edge [dentro, bend left] node{\tiny $(x,\{1\}\rho)$} (1);
\node (b) at ( 60:1.93cm) [blank] {}
edge [dentro, bend right] node[swap]{\tiny $(x,\{1\})$}(1)
edge [dentro, bend left] node{\tiny $(y,\{1\}\sigma)$} (5);
\node (c) at ( 180:1.53cm) [blank] {}
edge [dentro, bend right] node[swap, yshift = -2mm]{\tiny $(y,\{1\})$} (4)
edge [dentro, bend left] node[yshift = 2.5mm]{\tiny $(x,\{1\}\sigma^2)$} (3);;
\node (d) at ( 180:1.93cm) [blank] {}
edge [dentro, bend right] node[swap, yshift = 2mm]{\tiny $(x,\{1\}\rho\sigma^2)$}(3)
edge [dentro, bend left] node[yshift = -2mm] {\tiny $(y,\{1\}\rho\sigma)$} (4);
\node (e) at (300:1.53cm) [blank] {}
edge [dentro, bend right] node[swap]{\tiny $(y,\{1\}\sigma^2)$} (6)
edge [dentro, bend left] node{\tiny $(x,\{1\}\sigma)$} (2);
\node (f) at (300:1.93cm) [blank] {}
edge [dentro, bend right] node[swap]{\tiny $(x,\{1\}\rho\sigma)$}(2)
edge [dentro, bend left] node{\tiny $(y,\{1\}\rho)$}(6);

\node [above, yshift=1mm] at (1) {$(u,H)$};
\node [right, xshift = 1mm] at (5) {$(v,K\sigma)$};
\node [right, xshift = 1mm] at (2) {$(u,H\sigma)$};
\node [below, yshift = -1mm] at (6) {$(v,K\sigma^2)$};
\node [left, xshift = -1mm] at (3) {$(u,H\sigma^2)$};
\node [left, xshift = -1mm] at (4) {$(v,K)$};
\end{tikzpicture}
\end{tabular}}
\bigskip

\begin{tikzpicture}
[auto, inner sep=0.5mm, label distance = 1mm]
\node (5) at ( 0:1cm) [vertv] {};
\node (1) at ( 180:1cm) [vertu] {};

\node (a) at ( 0:0cm) [blank] {}
edge [dentro] node[swap]{$y$} (5)
edge [dentro] node{$x$} (1);
\node [above, yshift = 1mm] at (1) {$u$};
\node [above, yshift = 1mm] at (5) {$v$};
\end{tikzpicture}
\caption{Two covers}
\end{figure}
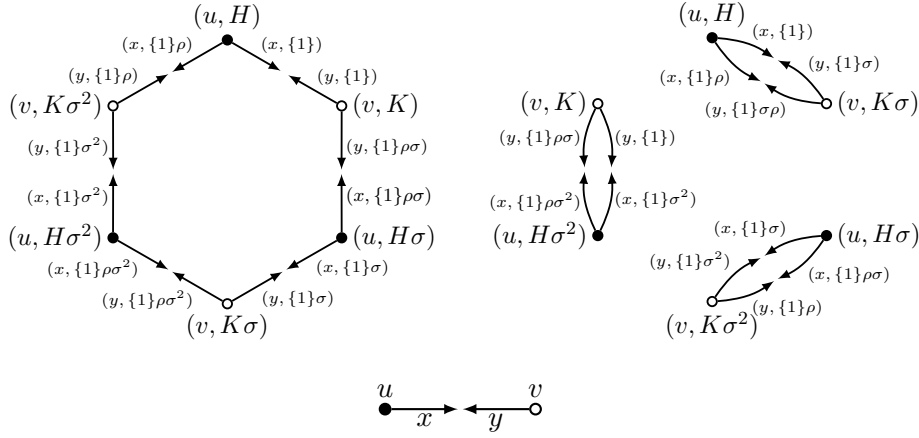
\end{example}

 \begin{remark}
The following formula for the end of a dart in a generalised cover $\Gamma=\GC(\Delta,G,\omega,\zeta)$ will be used often in calculations:
 $$\term_\Gamma (x, \omega(x)g) = 
 \beg_{\Gamma}(x^{-1}, \omega(x^{-1})\zeta(x) g) = (\term_\Delta x, \omega(\term_\Delta x)\zeta(x)g).$$
\end{remark}

\begin{lemma}
\label{lem:inverse}
Let $(\Delta,G,\omega,\zeta)$ be a generalised voltage graph and let $x_0 \in \D(\Delta)$ be a dart such that $x_0 \neq x_0^{-1}$. Then $\GC(\Delta,G,\omega,\zeta) =  \GC(\Delta,G,\omega,\lin{\zeta})$ where
$$
\lin{\zeta}(x) = \left\{
        \begin{array}{ll}        	
            \zeta(x) & \quad \hbox{ if } x \neq x_0^{-1};\\
            \zeta(x_0)^{-1} & \quad \hbox{ if } x = x_0^{-1}.
        \end{array}
    \right.
$$
\end{lemma}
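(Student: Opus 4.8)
The plan is to show that the two generalised voltage graphs produce literally the same graph by exhibiting a bijection on darts (and on vertices) that respects $\beg$ and $\inv$; in fact, since the vertex sets and the weight function $\omega$ are identical for both, I expect the two covers to have identical vertex sets and identical dart sets as well, so the real content is checking that the $\inv$ functions agree once we account for the change in voltage on the single dart $x_0^{-1}$. First I would record that $\lin\zeta$ differs from $\zeta$ only on $x_0^{-1}$, and that the new value $\lin\zeta(x_0^{-1}) = \zeta(x_0)^{-1}$ is obtained from the old one; hence the only darts of the cover whose $\inv$-image could be affected are those lying over $x_0$ and over $x_0^{-1}$.

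The key algebraic step is to verify that $(\Delta,G,\omega,\lin\zeta)$ is still a generalised voltage graph, i.e. that it satisfies \eqref{eq:volt1}, \eqref{eq:volt2} and \eqref{eq:volt3}. Condition \eqref{eq:volt1} involves only $\omega$ and is untouched. For \eqref{eq:volt2} and \eqref{eq:volt3} I would use the hypotheses on the original $\zeta$: from \eqref{eq:volt3} applied to $x_0$ we have $\zeta(x_0^{-1})\zeta(x_0)\in\omega(x_0)$, and from \eqref{eq:volt2} that $\omega(x_0)=\omega(x_0^{-1})^{\zeta(x_0)}$. These let me compute, for the pair $\{x_0,x_0^{-1}\}$ with the modified voltage, that $\lin\zeta(x_0)\lin\zeta(x_0^{-1})=\zeta(x_0)\zeta(x_0)^{-1}=1\in\omega(x_0^{-1})$ and the symmetric relation for the weight, confirming both conditions; the condition $x_0\neq x_0^{-1}$ is needed precisely so that redefining the voltage on $x_0^{-1}$ does not clash with the value on $x_0$.

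Having established that $\lin\zeta$ is a legitimate voltage assignment, I would then compare the two cover graphs dart by dart. The vertex sets coincide because $\beg$ depends only on $\omega$. The dart sets coincide because $\D(\Gamma)=\{(x,\omega(x)g)\}$ depends only on $\omega$. The function $\inv_\Gamma$ is unchanged on every dart lying over a dart $x\neq x_0,x_0^{-1}$. For the darts over $x_0$, the formula $\inv_\Gamma(x_0,\omega(x_0)g)=(x_0^{-1},\omega(x_0^{-1})\zeta(x_0)g)$ is identical in both graphs since $\lin\zeta(x_0)=\zeta(x_0)$. The only genuine check is on the darts over $x_0^{-1}$: I must show $(x_0^{-1},\omega(x_0^{-1})h)$ has the same image under both $\inv$-functions. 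Here I use that every coset $\omega(x_0^{-1})h$ equals $\omega(x_0^{-1})\zeta(x_0)g$ for a suitable $g$ — this is where the involution property proved in Lemma~\ref{lem:OK} is invoked — and then a short computation shows both definitions send it to $(x_0,\omega(x_0)g)$.

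The main obstacle I anticipate is the bookkeeping on the darts over $x_0^{-1}$: I need to be careful that replacing $\zeta(x_0^{-1})$ by $\zeta(x_0)^{-1}$ changes the \emph{label} of the inverse dart but not the \emph{underlying} dart, and verify using \eqref{eq:volt3} that the two candidate images determine the same coset of $\omega(x_0)$. Concretely, the image under the original $\inv_\Gamma$ is $(x_0,\omega(x_0)\zeta(x_0^{-1})h)$ while under $\lin{\inv}_\Gamma$ it is $(x_0,\omega(x_0)\zeta(x_0)^{-1}h)$, and these coincide exactly because $\zeta(x_0^{-1})\zeta(x_0)\in\omega(x_0)$ gives $\omega(x_0)\zeta(x_0^{-1})=\omega(x_0)\zeta(x_0)^{-1}$. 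Once this coset identity is in hand the equality of the two graphs is immediate, so the lemma follows.
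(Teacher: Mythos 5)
Your proposal is correct and follows essentially the same route as the paper's proof: verify that $(\Delta,G,\omega,\lin{\zeta})$ satisfies (\ref{eq:volt1})--(\ref{eq:volt3}), observe that the vertex sets, dart sets and $\beg$ functions of the two covers coincide, and reduce everything to the fibre over $x_0^{-1}$, where the coset identity $\omega(x_0)\zeta(x_0^{-1})=\omega(x_0)\zeta(x_0)^{-1}$ coming from (\ref{eq:volt3}) gives equality of the two $\inv$ functions. Your appeal to Lemma~\ref{lem:OK} to reparametrise the cosets $\omega(x_0^{-1})h$ as $\omega(x_0^{-1})\zeta(x_0)g$ is unnecessary (left multiplication by $\zeta(x_0)$ is a bijection of $G$, so this needs no lemma), but it is harmless and your final direct computation matches the paper's exactly.
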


\begin{proof}
Let us begin by showing that $(\Delta,G,\omega,\lin{\zeta})$ is a well defined generalised voltage graph. For this we need to show that $\omega$ and $\lin{\zeta}$ satisfy (\ref{eq:volt2}). Since $(\Delta,G,\omega,\zeta)$ is a generalised voltage graph, we have $\omega(x)=\omega(x^{-1})^{\zeta(x)} = \omega(x^{-1})^{\lin{\zeta}(x)}$ whenever $x \neq x_0$. It remains to show that $\omega(x_0^{-1}) = \omega(x_0)^{\lin{\zeta}(x_0^{-1})}$. From (\ref{eq:volt2}) we have $\omega(x_0) = \omega(x_0^{-1})^{\zeta(x_0)}$ and then $\omega(x_0^{-1}) = \omega(x_0)^{\zeta(x_0)^{-1}}$. Since $\zeta(x_0)^{-1} = \lin{\zeta}(x_0^{-1})$ we see that $\omega(x_0^{-1}) = \omega(x_0)^{\lin{\zeta}(x_0^{-1})}$. Therefore $(\Delta,G,\omega,\lin{\zeta})$ is well defined. 

Now, let $\Gamma:=\GC(\Delta,G,\omega,\zeta)$ and $\lin{\Gamma}:= \GC(\Delta,G,\omega,\lin{\zeta})$. Observe that $\V(\Gamma)=\V(\lin{\Gamma})$, $\D(\Gamma)=\D(\lin{\Gamma})$ and $\beg_{\Gamma}=\beg_{\lin{\Gamma}}$. Moreover, $\inv_{\Gamma}$ agrees with $\inv_{\lin{\Gamma}}$ in all darts except, possibly, in those lying  in $\fib(x_0^{-1})$. Hence, it suffices to show that $\inv_{\Gamma}(x_0^{-1},\omega(x_0^{-1})g) = \inv_{\lin{\Gamma}}(x_0^{-1},\omega(x_0^{-1})g)$ for all $g \in G$. In other words, that $(x_0,\omega(x_0)\zeta(x_0^{-1})g)= (x_0,\omega(x_0)\lin{\zeta}(x_0^{-1})g)$. From (\ref{eq:volt3}) we have $\zeta(x_0^{-1}) \in \omega(x_0)\zeta(x_0)^{-1}$, and then $\omega(x_0)\zeta(x_0^{-1}) = \omega(x_0)\zeta(x_0)^{-1}=\omega(x_0)\lin{\zeta}(x_0^{-1})$. Therefore, $\inv_{\Gamma} = \inv_{\lin{\Gamma}}$ and $\Gamma = \lin{\Gamma}$.
  \end{proof}

\begin{remark}
\label{rem:inverse}
In view of Lemma~\ref{lem:inverse} we see that, without changing the generalised cover $\GC(\Delta,G,\omega,\zeta)$, we can always modify $\zeta$ in such a way that
$\zeta(x)^{-1} =\zeta(x^{-1})$ holds for every dart $x$ not underlying a semiedge.
 \end{remark}

We finish this section by a number of general remarks about the generalised covering graphs.
In what follows we assume that  $(\Delta,G,\omega,\zeta)$ is a generalised voltage graph and that $\Gamma = \GC(\Delta,G,\omega,\zeta)$.


\begin{remark}
\label{rem:9}
If $x \in \V(\Delta) \cup \D(\Delta)$,
 then the set $\{ (x, \omega(x)g) : g \in G\} \subseteq \V(\Gamma) \cup \D(\Gamma)$ is called the {\em fibre above} $x$ and is denoted by
 $\fib(x)$.
Note that the mapping $\varphi\colon \V(\Gamma) \cup \D(\Gamma)\to V(\Delta) \cup \D(\Delta)$ defined by $\varphi(x, \omega(x)g) := x$ for every $(x, \omega(x)g) \in V(\Gamma) \cup \D(\Gamma)$ is a graph epimorphism, which we shall call {\em the generalised covering projection associated with $(\Delta,G,\omega,\zeta)$}.
\end{remark}

\begin{remark}
\label{rem:iota}
For $u \in \V(\Delta)$ let $\iota(u) = |G:\omega(u)|$ and observe that $\iota(u) = |\fib(u)|$. Similarly for $x\in \D(\Delta)$ such that $\beg x =u$, let 
\begin{align}
\label{eq:lambda}
\lambda(x)= |\omega(u):\omega(x)|.
\end{align}
Observe that $\iota(u)\lambda(x) = |G:\omega(x)|=| \fib(x)|$. Moreover, for $\tilde{u} \in \fib(u)$ we have $|\Gamma(\tilde{u}) \cap \fib(x)| = \lambda(x)$ and hence the valence of $\tilde{u}$ in $\Gamma$ is $\sum_{x \in \Delta(u)}\lambda(x)$.
\end{remark}

\begin{remark}
If for every vertex $u\in \V(\Delta)$ there exist a constant $c_u$ such that
$\lambda(x) = c_u$  for every dart $x\in \Delta(u)$, then
the generalised covering projection $\varphi \colon \Gamma \to \Delta$ is
{\em a branched covering} as defined \cite{MedNed1,MedNed2}.
\end{remark}

\subsection{Miscellanea}
 
  If $H$ is a subgroup of $G$, then we let $G/H:=\{ Hg :  g\in G\}$ denote the set of right cosets of $H$ in $G$. If $\varphi$ is a permutation of a set $X$, then we denote the image of $x$ by $\varphi$ as $x^\varphi$ and we define the product of two permutations $\varphi$ and $\psi$ of $X$ by 
 $x^{(\varphi\psi)} :=  (x^\varphi)^\psi$. The symmetric group $\Sym(X)$ is thus defined as the set of all permutations of $X$ equipped with such a product. In particular, all the groups will be acting on the sets from right. For a subgroup $H\le G$ we let $\core_G(H) = \cap_{g\in G} H^g$ denote the {\em core} of $H$ in $G$.

\section{Reconstruction}
\label{sec:reconstruction}

This section is devoted to the proof of Theorem~\ref{the:lift}, stated in Section~\ref{sec:intro}.
We will prove a slightly more detailed version of the theorem.

\begin{theorem}
\label{the:lift2}
Let $\Gamma$ be a graph and let $G \leq \Aut(\Gamma)$.
Let $\mathcal{T}_V$ be a transversal of $\V(\Gamma)/G$ and let $\mathcal{T}_D$ be a transversal of $\D(\Gamma)/G$
 such that $x \in \mathcal{T}_D$ implies $\beg x \in \mathcal{T}_V$ (note that such a pair of transversals always exists).
 Set $\mathcal{T}:= \mathcal{T}_D \cup \mathcal{T}_V$. Then every vertex or dart of $\Gamma/G$ can be written uniquely as 
$x^G$ with $x \in \mathcal{T}$. For $x$ in $\mathcal{T}_D$ let $\iota(x)$ be the unique element of $\mathcal{T}_D$ such that $x^{-1} \in \iota(x)^G$.
Let $\zeta: \D(\Gamma/G) \to G$ be a function such that:
\begin{align}
\label{eq:zetacondition}
\iota(x)^{\zeta(x^G)} = x^{-1} \hbox{ for every } x \in \mathcal{T}_D.
\end{align}

 Define $\omega: \V(\Gamma)/G \cup \D(\Gamma)/G \to S(G)$ by letting 
 $$\omega(x^G):=G_x \>\> \hbox{ for every }\>\> x \in \mathcal{T}.$$
Then the quadruple $(\Gamma/G, G, \omega,\zeta)$ is a generalised voltage graph and
there exists an isomorphism between $\Gamma$  and $\GC(\Gamma/G,G,\omega,\zeta)$
which maps every $G$-orbit on $\V(\Gamma) \cup \D(\Gamma)$ bijectively to a fibre of the corresponding generalised covering graph
$\GC(\Gamma/G,G,\omega,\zeta).$
\end{theorem}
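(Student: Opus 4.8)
The plan is to first verify that $(\Gamma/G,G,\omega,\zeta)$ satisfies conditions (\ref{eq:volt1})--(\ref{eq:volt3}), and then to construct the required isomorphism explicitly via the orbit--stabiliser correspondence. Throughout I would use three elementary facts about a group $G\le\Aut(\Gamma)$: since an automorphism fixing a dart $x$ also fixes $\beg x$, we have $G_x\le G_{\beg x}$; since automorphisms commute with the inverse map, $G_{x^{-1}}=G_x$; and for any $a\in G$ and any $u\in\V(\Gamma)\cup\D(\Gamma)$ the stabiliser of the image satisfies $G_{u^a}=G_u^{\,a}$. Because every vertex or dart of $\Gamma/G$ is $x^G$ for a unique $x\in\mathcal{T}$, it suffices to check the three conditions on transversal elements.

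Condition (\ref{eq:volt1}) is then immediate from $G_x\le G_{\beg x}$ and the compatibility hypothesis $\beg x\in\mathcal{T}_V$ for $x\in\mathcal{T}_D$. For (\ref{eq:volt2}), write $a=\zeta(x^G)$; the defining relation $\iota(x)^a=x^{-1}$ together with the third fact above gives $G_{\iota(x)}^{\,a}=G_{\iota(x)^a}=G_{x^{-1}}=G_x$, which is precisely $\omega((x^G)^{-1})^{\zeta(x^G)}=\omega(x^G)$. For (\ref{eq:volt3}) the key preliminary observation is that $\iota$ is an involution on $\mathcal{T}_D$, that is, $\iota(\iota(x))=x$: indeed $\iota(x)=(x^{-1})^h$ for some $h\in G$, so $\iota(x)^{-1}=x^h\in x^G$, forcing $\iota(\iota(x))=x$. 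Setting $b=\zeta(\iota(x)^G)$, its defining relation then reads $x^b=\iota(x)^{-1}$, and hence $x^{ba}=(\iota(x)^{-1})^a=(\iota(x)^a)^{-1}=(x^{-1})^{-1}=x$, so $ba\in G_x=\omega(x^G)$, which is exactly (\ref{eq:volt3}).

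For the isomorphism, set $\Gamma'=\GC(\Gamma/G,G,\omega,\zeta)$ and define $\Phi\colon\Gamma\to\Gamma'$ by the orbit--stabiliser rule: given $w\in\V(\Gamma)$ with transversal representative $v\in\mathcal{T}_V$ and any $g$ with $v^g=w$, set $\Phi(w)=(v^G,\omega(v^G)g)=(v^G,G_v g)$, and analogously $\Phi(y)=(x^G,G_x g)$ for a dart $y=x^g$ with $x\in\mathcal{T}_D$. Well-definedness is exactly the statement that $v^g=v^{g'}$ if and only if $G_v g=G_v g'$ (and likewise for darts), and the same equivalence yields injectivity; surjectivity is clear, since for fixed $v$ the coset $G_v g$ ranges over all of $G/G_v$. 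Thus $\Phi$ is a bijection on vertices and on darts, and by construction it carries each orbit $v^G$ (respectively $x^G$) bijectively onto the fibre $\fib(v^G)$ (respectively $\fib(x^G)$), which is the final assertion of the theorem.

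It remains to check that $\Phi$ commutes with $\beg$ and with taking inverses. For $\beg$, if $y=x^g$ with $v:=\beg x\in\mathcal{T}_V$, then $\beg y=v^g$, so $\Phi(\beg y)=(v^G,G_v g)$, while $\beg_{\Gamma'}\Phi(y)=(v^G,\omega(v^G)g)$ gives the same coset. The delicate step, which I expect to be the main obstacle, is compatibility with the inverse map: here $y^{-1}=(x^{-1})^g$ lies in the orbit $\iota(x)^G$, so I must identify the correct coset representative relative to the transversal element $\iota(x)$. Writing $a=\zeta(x^G)$, the relation $\iota(x)^a=x^{-1}$ gives $\iota(x)^{ag}=(x^{-1})^g=y^{-1}$, whence $\Phi(y^{-1})=(\iota(x)^G,G_{\iota(x)}\,ag)$; on the other side the definition of $\inv_{\Gamma'}$ produces $((x^G)^{-1},\omega((x^G)^{-1})\zeta(x^G)g)=(\iota(x)^G,G_{\iota(x)}\,ag)$, and the two coincide. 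The genuine content is the bookkeeping showing that the representative $ag$ dictated by the defining relation of $\zeta$ is exactly the one the cover construction attaches to the inverse dart; this compatibility with the inverse map, together with (\ref{eq:volt2}) and (\ref{eq:volt3}), are the only places where the conjugation conditions on $\omega$ and $\zeta$ are genuinely used.
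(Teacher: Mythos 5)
Your proof is correct and follows essentially the same route as the paper's: verify conditions (\ref{eq:volt1})--(\ref{eq:volt3}) on transversal representatives using the stabiliser identities $G_x\le G_{\beg x}$, $G_{x^{-1}}=G_x$, $G_{u^a}=G_u^a$, and then check that the orbit--coset bijection intertwines $\beg$ and $\inv$ via the defining relation for $\zeta$. The only cosmetic differences are that your isomorphism goes from $\Gamma$ to the cover rather than the reverse, and that you make explicit the involution property $\iota(\iota(x))=x$, which the paper uses implicitly when applying (\ref{eq:zetacondition}) to $\iota(x)$.
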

	
\begin{proof}
Since we work with three distinct graphs in this proof, namely $\Gamma$, $\Gamma/G$ and $\Theta:=\GC(\Gamma/G,G,\omega,\zeta)$,
we will be very careful not to confuse the corresponding $\V$, $\D$, $\beg$ and $\inv$ operators. In particular, we let 
$(V',D',\beg',\Inv'):=\Gamma/G$ and reserve the shorthand notation $x^{-1}$ for $\inv_\Gamma x$ only when $x\in \D(\Gamma)$.
Further, for $x\in \V(\Gamma) \cup \D(\Gamma)$, we let $\overline{x}:=x^G$ denote the $G$-orbit of $x$; note that then $\lin{x} \in V'\cup D'$.

  We will now show that the quadruple $(\V(\Gamma)/G, \D(\Gamma)/G, \omega,\zeta)$ satisfies conditions (\ref{eq:volt1})--(\ref{eq:volt3}).
For the remainder of this proof, consider a dart $x \in \mathcal{T}_D$. 
By definition of the quotient graph it follows that the darts $\lin{x}$ and $\lin{\iota(x)}$ are mutually inverse; that is
\begin{align}
\label{eq:invxy}
\invP \lin{x}=  \lin{\iota(x)}\>\> \hbox{ and  }\>\> \invP \lin{\iota(x)} = \lin{x}. 
\end{align}
Moreover, by definition of $\zeta$ it also follows that 
$\iota(x) = (x^{-1})^{\zeta(\lin{x})^{-1}}$ and since $\zeta$ is an automorphism, we have $\iota(x)^{-1} = x^{\zeta(\inv' \lin{x})}$ and so
\begin{align}
\label{eq:zeta1}
x^{\zeta(\lin{\iota(x)})} = \iota(x)^{-1}.
\end{align}
Similarly, by (\ref{eq:zetacondition}) we have
\begin{align}
\label{eq:zeta2}
(\iota(x)^{-1})^{\zeta(\lin{x})} = x.
\end{align}
Using formulae (\ref{eq:invxy}), (\ref{eq:zeta1}) and (\ref{eq:zeta2}), we may conclude that
 $$x^{\zeta(\inv' \lin{x})\zeta(\lin{x})}=x^{\zeta(\lin{\iota(x)})\zeta(\lin{x})}=(\iota(x)^{-1})^{\zeta(\lin{x})}=x.$$ 
In particular, $\zeta(\inv' \lin{x})\zeta(\lin{x}) \in G_x$, and since $\omega(\lin{x})=G_x$, we see that the condition (\ref{eq:volt3}) is satisfied.

Let us now show that the condition (\ref{eq:volt2}) is satisfied. Observe first that the stabilisers $G_{\iota(x)}$ and $G_{\iota(x)^{-1}}$ of mutually inverse darts $y$ and $y^{-1}$ are equal and that for any $z\in \D(\Gamma)$ and any $g\in \Aut(\Gamma)$, the conjugate $(G_z)^g$ of the stabiliser $G_z$  equals the stabiliser $G_{z^g}$. We thus see that
$$\omega(\invP \lin{x})^{\zeta(\lin{x})} = \omega(\lin{\iota(x)})^{\zeta(\lin{x})} = 
(G_{\iota(x)})^{\zeta(\lin{x})} =  (G_{\iota(x)^{-1}})^{\zeta(\lin{x})} = $$
$$G_{ (\iota(x)^{-1})^{\zeta(\lin{x})} } =
G_x = \omega(\lin{x}),$$
showing that the condition (\ref{eq:volt2}) is satisfied.
 Finally, since $G_x \leq G_{\beg x}$ for any dart $x\in \D(\Gamma)$, we see that (\ref{eq:volt1}) holds. Therefore $(\Gamma/G, G, \omega, \zeta)$ is a generalised voltage graph.

We will now show that the corresponding generalised cover $\Theta$ 
is isomorphic to the graph $\Gamma$.
By definition of the generalised cover and the function $\omega$, we see that $\V(\Theta) = \{ (\lin{v}, G_vg) : v \in {\mathcal{T}}_V, g \in G\}$ and
$\D(\Theta) = \{ (\lin{x}, G_xg) : x \in {\mathcal{T}}_D, g \in G\}$.
Let $\varphi: \V(\Theta) \cup \D(\Theta) \to \V(\Gamma) \cup \D(\Gamma)$ be given by 
$$\varphi(\lin{x},G_xg)=x^g \> \hbox{ for } \> x \in \mathcal{T}.$$ 

To see that $\varphi$ is well defined, suppose that for some $x \in  \mathcal{T}$, we have $G_xg = G_xg'$. It follows that $g'g^{-1} \in G_x$ and so $x^g=x^{g'}$. Then $\varphi(\lin{x},G_xg) = x^g = x^{g'} = \varphi(\lin{x},G_xg')$. Hence $\varphi$ is well defined. Since the converse of every implication in the preceding lines holds, this also shows that $\varphi$ is injective. Since $\varphi$ is clearly surjective, it remains to be shown that $\beg_\Gamma \varphi(X) = \varphi(\beg_\Theta X)$ and $\Inv_\Gamma \varphi(X) = \varphi(\Inv_\Theta X)$ for every dart  $X\in \D(\Theta)$.
Let $X\in \D(\Theta)$. Then $X=(\lin{x},G_xg)$ for some $x\in \mathcal{T}_D$. Then
$$\beg_\Gamma \varphi(X) =\beg_\Gamma \varphi(\lin{x},G_xg)=\beg_\Gamma (x^g)= (\beg_\Gamma x)^g.$$
 On the other hand
 $$
 \varphi(\beg_\Theta (\lin{x},G_xg))
 = \varphi(\begP \lin{x}, \omega(\begP \lin{x})g) 
= \varphi (\lin{(\beg x)}, G_{\beg x}g) 
= (\beg x)^g.
 $$

Hence $\beg_\Gamma \varphi(X) = \varphi(\beg_\Theta X)$, as claimed. 
Further, we see that 
$$\invG \varphi(X)= \invG \varphi(\lin{x},G_xg) = \invG(x^g)=(\invG x)^g.$$
Recall now the definition of the dart $\iota(x)$ and the fact that $\invP \lin{x} = \lin{\iota(x)}$. Then
 $$ \varphi(\invI(\lin{x},G_xg))= \varphi(\invP \lin{x}, \omega(\invP \lin{x})\zeta(\lin{x})g)
 = \varphi(\lin{\iota(x)},G_{\iota(x)}\zeta(\lin{x})g) = $$ 
 $$\iota(x)^{\zeta(\lin{x})g} = (\invG x)^g.$$
 Note the the last equality follows from (\ref{eq:zeta2}) while the second to last from the fact that $\iota(x) \in \mathcal{T}_D$.
 We have thus shown that $\invG \varphi(X) = (\invG x)^g = \varphi (\inv_\Theta X)$. 
We conclude that $\varphi$ is an isomorphism and thus $\Gamma \cong \Theta$, as claimed.
Moreover, $\varphi$ clearly maps fibres in $\Theta$ to $G$-orbits on $\Gamma$.
This finishes the proof of Theorem~\ref{the:lift}.
\end{proof}

\begin{remark}
\label{rem:trivial}
Assume the notation introduced in the statement and the proof of the above theorem.
Consider a dart $X\in \Gamma/G$ and let $x$ be the unique element  in $\mathcal{T}_D$ such that $X=x^G$.
Then $\zeta(X)$ is defined to be an element $g$ of $G$
which maps $x^{-1}$ to the unique element $\iota(x)\in\mathcal{T}$ such that $(x^{-1})^G = \iota(x)^G$
(note that there is some freedom in the choice of such an element $g$).
In particular, if $x^{-1} \in \mathcal{T}_D$, then one can assume that $\zeta(X)$ is trivial.
\end{remark}

For the reasons that will become apparent in Remark~\ref{rem:faithful} below and in
Proposition~\ref{faithful}, we introduce the following property of generalised voltage graphs:

\begin{definition}
\label{def:faithful}
A generalised voltage graph $(\Delta,G,\omega,\zeta)$ is said to be 
{\em faithful} provided that
\begin{equation}
  \core_G\big(\bigcap_{x\in \D(\Delta)} \omega(x)\big) = 1.
\end{equation}
\end{definition}

\begin{remark}
\label{rem:faithful}
The generalised voltage graph $(\Gamma/G,G,\omega,\zeta)$ mentioned in
the statement of Theorem~\ref{the:lift} is in fact faithful. Indeed: Using the notation introduced
 in the proof above, observe that $\{x^g : x \in {\mathcal T}_D, g \in G\} = D$.
 Therefore
$$
\core_G \bigl(  \bigcap_{X\in \D(\Gamma/G)} \omega(X) \bigr) = 
\core_G \bigl(  \bigcap_{x\in {\mathcal T}_D} G_x \bigr) =
\bigcap_{x\in {\mathcal T}_D, g\in G} (G_x)^g  =
\bigcap_{x\in D(\Gamma)} G_x,
$$
which is trivial since $G$ acts faithfully on $\D(\Gamma)$. 
\end{remark}

\section{Automorphisms of generalised covers}
\label{sec:auto}

This section is devoted to the study of the automorphisms of
the generalised covering graph $\Gamma$ arising from a given
generalised voltage graph $(\Delta,G,\omega,\zeta)$.

The first lemma and the proposition that follows it show that the group $G$ acts in a natural way on $\Gamma$, as one would of course expect.

\begin{lemma}
\label{lem:auto}
Let $(\Delta,G,\omega,\zeta)$ be a generalised voltage graph, let $h\in G$ and
let $\Gamma = \GC(\Delta,G,\omega,\zeta)$. Then the permutation $\bh$ of 
$\V(\Gamma)\cup \D(\Gamma)$ defined by $(x,\omega(x)g)^ \bh:=(x,\omega(x)gh)$ for every $x\in \V(\Delta)\cup \D(\Delta)$ and $g\in G$ is an automorphism of $\Gamma$.
\end{lemma}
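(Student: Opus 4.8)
The plan is to verify directly that the permutation $\bh$ is a graph automorphism, which by definition means checking that it maps vertices to vertices, darts to darts, and commutes with both structure-defining operators $\beg_\Gamma$ and $\inv_\Gamma$. Since $\bh$ visibly preserves the first coordinate of each pair $(x,\omega(x)g)$, it sends $\fib(v)$ to itself for every $v\in\V(\Delta)$ and $\fib(x)$ to itself for every $x\in\D(\Delta)$; in particular vertices go to vertices and darts to darts. The first thing I would nail down is that $\bh$ is genuinely a well-defined permutation: right multiplication $\omega(x)g\mapsto\omega(x)gh$ is a well-defined bijection on the coset space $\omega(x)\backslash G$ for each fixed $x$ (its inverse being multiplication by $h^{-1}$), so $\bh$ is a bijection of $\V(\Gamma)\cup\D(\Gamma)$. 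This is routine but worth a sentence.

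The substance of the proof is the two compatibility conditions. For the $\beg$ condition, I would compute both sides of $\beg_\Gamma\big((x,\omega(x)g)^{\bh}\big)=\big(\beg_\Gamma(x,\omega(x)g)\big)^{\bh}$ using the defining formula from Definition~\ref{def:gencov}. The left side is $\beg_\Gamma(x,\omega(x)gh)=(\beg_\Delta x,\omega(\beg_\Delta x)gh)$, while the right side is $(\beg_\Delta x,\omega(\beg_\Delta x)g)^{\bh}=(\beg_\Delta x,\omega(\beg_\Delta x)gh)$; these agree on the nose. The key point making this work is simply that the action of $\bh$ is right multiplication, which commutes with the left-coset bookkeeping that $\beg_\Gamma$ performs, since $\beg_\Gamma$ only changes the subgroup label $\omega(x)$ to $\omega(\beg_\Delta x)$ and replaces $g$ by the same $g$.

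For the $\inv$ condition I would run the analogous computation. Applying $\inv_\Gamma$ after $\bh$ gives $\inv_\Gamma(x,\omega(x)gh)=(\inv_\Delta x,\omega(\inv_\Delta x)\zeta(x)gh)$, whereas applying $\bh$ after $\inv_\Gamma$ gives $\big(\inv_\Delta x,\omega(\inv_\Delta x)\zeta(x)g\big)^{\bh}=(\inv_\Delta x,\omega(\inv_\Delta x)\zeta(x)gh)$, so again the two sides coincide. The only thing to be careful about is the placement of $h$: because $\bh$ multiplies on the right and $\zeta(x)$ enters on the left of $g$ in the formula for $\inv_\Gamma$, the factor $\zeta(x)$ sits to the left of $g$ and is untouched, so the equality holds. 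I would note explicitly that well-definedness of these expressions was already established in Lemma~\ref{lem:OK}, so no new coset-independence argument is needed.

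I do not expect any genuine obstacle here — the entire content is that right multiplication by a fixed group element commutes with the left-sided coset manipulations built into $\beg_\Gamma$ and $\inv_\Gamma$. The one place demanding a little care is the ordering of group elements: one must respect that voltages and weight-subgroup labels act on the left of the ``floating'' coset representative $g$ while $\bh$ acts on its right, so the two operations commute. Having verified that $\bh$ preserves the two operators and is a bijection, I would conclude that $\bh\in\Aut(\Gamma)$, which completes the proof.
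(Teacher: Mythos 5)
Your proof is correct and follows essentially the same route as the paper's: a direct computation showing that right multiplication by $h$ commutes with both $\beg_\Gamma$ and $\inv_\Gamma$, exploiting the fact that $\zeta(x)$ and the weight subgroups act on the left of the coset representative $g$ while $\bh$ acts on the right. Your explicit verification that $\bh$ is a well-defined bijection is a point the paper leaves to the reader, so nothing is missing.
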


\begin{proof}
We leave it to the reader to check that $\bh$ is indeed a permutation of $\V(\Gamma)\cup \D(\Gamma)$. To see that $\bh$ is an automorphism of $\Gamma$, observe that:
\begin{eqnarray*}
(\beg (x,\omega(x)g))^\bh &=& (\beg x, \omega(\beg x)g)^\bh\\
&=&(\beg x, \omega(\beg x)gh) \\
&=& \beg(x,\omega(x)gh) \\
&=& \beg((x,\omega(x)g)^\bh), 
\end{eqnarray*}
and
\begin{eqnarray*}
((x,\omega(x)g)^{-1})^\bh &=& (x^{-1},\omega(x^{-1})\zeta(x)g)^\bh \\
&=& (x^{-1},\omega(x)^{-1}\zeta(x)gh) \\
&=& (x,\omega(x)gh)^{-1}\\
&=& ((x,\omega(x)g)^\bh)^{-1}.
\end{eqnarray*}
Therefore, $\bh$ is a graph morphism from $\Gamma$ to $\Gamma$ as it commutes with $\beg$ and $\inv$, and thus an automorphism of $\Gamma$
\end{proof}

Recall that we call the generalised voltage graph $(\Delta,G,\omega,\zeta)$ is faithful
provided that the core of the group $\cap_{x\in \D(\Delta)} \omega(x)$ in $G$ is trivial
(see Definition~\ref{def:faithful}).
We now prove a result that explains the choice of the word ``faithful''.

\begin{proposition}
\label{faithful}
Let $(\Delta,G,\omega,\zeta)$ be a generalised voltage graph.
Then the mapping $\Phi:G \to \Aut(\Gamma)$, $\Phi(h)=\bh$, is
a homomorphism of groups, which is injective if and only if
the generalised voltage graph is faithful.
\end{proposition}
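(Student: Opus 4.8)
The plan is to establish that $\Phi$ is a homomorphism by a direct computation, and then characterise its kernel explicitly, showing that the kernel is trivial precisely when the faithfulness condition holds. Since Lemma~\ref{lem:auto} already guarantees that each $\bh$ is an automorphism, the map $\Phi$ is well-defined into $\Aut(\Gamma)$; what remains is the algebraic bookkeeping.

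\emph{Homomorphism.} First I would verify $\Phi(h_1 h_2) = \Phi(h_1)\Phi(h_2)$ by applying both sides to an arbitrary element $(x,\omega(x)g)$ of $\V(\Gamma)\cup\D(\Gamma)$. On one side, $(x,\omega(x)g)^{\overline{h_1 h_2}} = (x,\omega(x)g h_1 h_2)$ directly from the definition in Lemma~\ref{lem:auto}. On the other side, applying $\bh_1$ then $\bh_2$ gives $(x,\omega(x)gh_1)^{\bh_2} = (x,\omega(x)g h_1 h_2)$, using the convention that permutations act on the right and compose as $x^{(\varphi\psi)} = (x^\varphi)^\psi$. These agree, so $\Phi$ is a homomorphism. (A routine check confirms $\Phi(1)$ is the identity.)

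\emph{Kernel.} The heart of the argument is to compute $\ker\Phi$. An element $h$ lies in the kernel iff $\bh$ fixes every vertex and dart, i.e.\ $(x,\omega(x)g)^{\bh} = (x,\omega(x)g)$ for all $x$ and all $g\in G$. By the definition of equality of darts in the generalised cover, this means $\omega(x)gh = \omega(x)g$ for all $x\in\V(\Delta)\cup\D(\Delta)$ and all $g\in G$; equivalently $ghg^{-1}\in\omega(x)$, i.e.\ $h\in\omega(x)^{g^{-1}} = g\,\omega(x)\,g^{-1}$ for every $g$ and every $x$. The key step is to recognise that quantifying over all $g\in G$ turns the intersection of the $\omega(x)$ into its core: $h$ lies in the kernel iff $h\in\bigcap_{g\in G}\bigcap_{x}g\,\omega(x)\,g^{-1} = \core_G\big(\bigcap_{x}\omega(x)\big)$. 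Here I would also note that the condition (\ref{eq:volt1}), $\omega(x)\le\omega(\beg_\Delta x)$, lets me restrict the intersection to darts only without loss, matching the form in Definition~\ref{def:faithful}. Thus $\ker\Phi = \core_G\big(\bigcap_{x\in\D(\Delta)}\omega(x)\big)$, and $\Phi$ is injective iff this core is trivial, which is exactly faithfulness.

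\emph{Main obstacle.} I do not expect a genuine difficulty; the one point requiring care is the translation between ``$\omega(x)gh=\omega(x)g$ for all $g$'' and membership in the \emph{core}, since the conjugation convention $g^h = h^{-1}gh$ and the right-coset notation must be tracked consistently to avoid inverting on the wrong side. Getting this bookkeeping right — and confirming that ranging $x$ over $\D(\Delta)$ rather than all of $\V(\Delta)\cup\D(\Delta)$ yields the same intersection via (\ref{eq:volt1}) — is the only place where an error could creep in.
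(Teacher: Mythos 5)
Your proof is correct and takes essentially the same route as the paper: both reduce the proposition to computing $\ker\Phi$, identify it with $\core_G\bigl(\bigcap_{x\in\D(\Delta)}\omega(x)\bigr)$, and use condition (\ref{eq:volt1}) to drop the vertex terms from the intersection. One small slip at exactly the point you flagged: for a \emph{fixed} $g$, the condition $ghg^{-1}\in\omega(x)$ is equivalent to $h\in g^{-1}\omega(x)g=\omega(x)^g$, not to $h\in g\,\omega(x)\,g^{-1}=\omega(x)^{g^{-1}}$ as you wrote; this is harmless here only because you immediately quantify over all $g\in G$ and $g\mapsto g^{-1}$ is a bijection of $G$, so the resulting intersection is the same core.
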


\begin{proof}
Let $K$ be the kernel of $\Phi$.
Observe that for $h \in G$, $(x,\omega(x)g)^\bh=(x,\omega(x)g)$ if and only if $h^{g^{-1}} \in \omega(x)$ or, equivalently, $h \in \omega(x)^g$. Hence, $h \in K$ if and only if 
\begin{equation*}
h \in \bigcap\limits_{x \in \D(\Gamma)\cup\V(\Gamma)} \bigcap\limits_{g \in G} \omega(x)^g. 
\end{equation*}

However, since $\omega(x) \leq \omega(\beg x)$ for all $x \in \D(\Delta)$, we see that $\omega(x)^g \leq \omega(\beg x)^g$ and thus
\begin{equation}
\label{eq:core}
\bigcap\limits_{x \in \D(\Gamma)\cup\V(\Gamma)} \bigcap\limits_{g \in G} \omega(x)^g = \bigcap\limits_{x \in \D(\Gamma)} \bigcap\limits_{g \in G} \omega(x)^g = 
\bigcap\limits_{g \in G} \bigl( \bigcap\limits_{x \in \D(\Gamma)}  \omega(x)\bigr)^g. 
\end{equation}
Therefore, $K$ is trivial if and only if the right side of (\ref{eq:core}) is trivial, that is, if and only if the generalised voltage graph is faithful.
\end{proof}

The next lemma and the corollaries that follow it establish another natural source of
automorphisms of the generalised covering graph.

\begin{lemma}
\label{lem:isovolt}
Let $(\Delta,G,\omega,\zeta)$ and $(\Delta',G',\omega',\zeta')$ be two generalised voltage graphs, 
and let $\Gamma = \GC(\Delta,G,\omega,\zeta)$ and $\Gamma' = \GC(\Delta',G',\omega',\zeta')$.
Suppose that there exist a graph isomorphisms $\varphi:\Delta \to \Delta'$ and a group isomorphism $f:G \to G'$ such that $f(\omega(x))=\omega'(\varphi(x))$ for every $x\in \V(\Delta) \cup \D(\Delta)$ and $f(\zeta(x))=\zeta'(\varphi(x))$ for every $x\in \V(\Delta)$. 
Let  $\Phi: \V(\Gamma) \cup \D(\Gamma) \to \V(\Gamma') \cup \D(\Gamma')$ be given by 
$$
\Phi \colon (x,\omega(x)g) \mapsto (\varphi(x),\omega'(\varphi(x))f(g)) \hbox{ for every } x \in \V(\Delta) \cup \D(\Delta).
$$
Then $\Phi$ is an isomorphism of graphs between $\Gamma$ and $\Gamma'$.
\end{lemma}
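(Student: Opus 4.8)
The plan is to verify directly that $\Phi$ is a well-defined bijection that commutes with the begin- and inverse-maps of the two covers; since a morphism of graphs is exactly a vertex-to-vertex, dart-to-dart map commuting with $\beg$ and $\inv$, and an isomorphism is a bijective such map, establishing these three properties (well-definedness, bijectivity, compatibility with $\beg$ and $\inv$) finishes the proof. Note that $\varphi$ sends vertices to vertices and darts to darts, so $\Phi$ automatically respects the vertex/dart partition, and there is nothing extra to check there.

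First I would check that $\Phi$ is well defined, since it is prescribed on coset representatives. If $\omega(x)g = \omega(x)h$ then $gh^{-1}\in\omega(x)$, and applying the homomorphism $f$ gives $f(g)f(h)^{-1}=f(gh^{-1})\in f(\omega(x))=\omega'(\varphi(x))$, whence $\omega'(\varphi(x))f(g)=\omega'(\varphi(x))f(h)$; thus the image $\Phi(x,\omega(x)g)$ is independent of the representative $g$. This step uses crucially both that $f$ is a homomorphism and the compatibility $f(\omega(x))=\omega'(\varphi(x))$.

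Next I would establish bijectivity, and the cleanest route is to exhibit a two-sided inverse. Because $\varphi$ and $f$ are isomorphisms, the map $(x',\omega'(x')g')\mapsto(\varphi^{-1}(x'),\omega(\varphi^{-1}(x'))f^{-1}(g'))$ is well defined by the same membership argument applied to $f^{-1}$ (here one uses $f^{-1}(\omega'(x'))=\omega(\varphi^{-1}(x'))$), and a short substitution shows it is inverse to $\Phi$. Alternatively one checks injectivity by running the well-definedness computation backwards and surjectivity using that $\varphi$ and $f$ are onto.

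Finally I would verify the two compatibility identities on an arbitrary dart $X=(x,\omega(x)g)$ with $x\in\D(\Delta)$. For $\beg$ one compares $\Phi(\beg_\Gamma X)=(\varphi(\beg_\Delta x),\omega'(\varphi(\beg_\Delta x))f(g))$ with $\beg_{\Gamma'}(\Phi(X))=(\beg_{\Delta'}\varphi(x),\omega'(\beg_{\Delta'}\varphi(x))f(g))$, which agree because $\varphi$ commutes with $\beg$, i.e. $\varphi(\beg_\Delta x)=\beg_{\Delta'}\varphi(x)$. The identity for $\inv$ is the heart of the argument and the step I expect to be the main obstacle, as it is the only place where all three hypotheses must be combined simultaneously: I would compute $\Phi(\inv_\Gamma X)=(\varphi(x^{-1}),\omega'(\varphi(x^{-1}))f(\zeta(x))f(g))$ and $\inv_{\Gamma'}(\Phi(X))=(\inv_{\Delta'}\varphi(x),\omega'(\inv_{\Delta'}\varphi(x))\zeta'(\varphi(x))f(g))$, and match them using that $\varphi$ commutes with $\inv$ (so $\varphi(x^{-1})=\inv_{\Delta'}\varphi(x)$), the weight compatibility on the dart $x^{-1}$, and, decisively, the voltage compatibility $f(\zeta(x))=\zeta'(\varphi(x))$ for the dart $x$. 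The difficulty is purely bookkeeping — tracking which coset appears and distributing $f$ across the product $\zeta(x)g$ — but it is precisely where the hypothesis relating $\zeta$ and $\zeta'$ enters, which is why I would be careful to apply that compatibility to $x$ as a dart.
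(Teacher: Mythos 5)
Your proposal is correct and follows essentially the same approach as the paper's proof: a direct verification that $\Phi$ is well defined, bijective, and commutes with $\beg$ and $\inv$, with the $\inv$ computation using $f(\zeta(x))=\zeta'(\varphi(x))$ exactly as the paper does (and you rightly read the hypothesis as applying to darts, despite the typo ``$x\in\V(\Delta)$'' in the statement). The only cosmetic difference is that you offer an explicit two-sided inverse for bijectivity, whereas the paper observes that the well-definedness equivalence is reversible (giving injectivity) and that surjectivity follows from that of $\varphi$ and $f$ --- an alternative you also mention.
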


\begin{proof}
We first show that the image of $\Phi$ does not depend on the choice of representatives of a coset $\omega(x)g$ and that $\Phi$ is injective. Suppose that $\omega'(\varphi(x))f(g)) = \omega'(\varphi(x))f(h))$ for some $h,g\in G$. Then $f(\omega(x))f(g) = f(\omega(x))f(h)$ and hence, since $f$ is a group isomorphism,
 $\omega(x)g=\omega(x)h$.  This shows that $\Phi$ is a well-defined function on $\V(\Gamma) \cup \D(\Gamma)$ and that it is injective. Surjectivity of $\Phi$ follows directly from the fact that both $\varphi$ and $f$ are surjective. It remains to see that $\Phi$ intertwines the functions $\beg$ and $\beg'$ as well as $\inv$ and $\inv'$. Note that
$$\Phi(\beg(x,\omega(x)g)) = \Phi(\beg x,\omega(\beg x)g) = (\varphi(\beg x),f(\omega(\beg x))f(g)).$$
But $\varphi$ is a graph isomorphism, therefore 
$$
\varphi(\beg x) = \beg'(\varphi(x))\> \hbox{ and }\> f(\omega(\beg x)) = \omega'(\varphi(\beg x))=\omega'(\beg' (\varphi(x))).
$$
 Hence
$$
 \Phi(\beg(x,\omega(x)g)) = (\beg'(\varphi(x)),\omega(\beg'(\varphi(x))f(g)).
$$
On the other hand,
 $$
 \beg'(\Phi(x,\omega(x)g))=\beg'(\varphi(x),f(\omega(x))f(g))=(\beg'(\varphi(x)), \omega'(\beg'( \varphi(x)))f(g)).
 $$
 Therefore, $\Phi(\beg(x,\omega(x)g)) = \beg'\Phi(x,\omega(x)g)$, as required.
Similarly, 
$$
\Phi((x,\omega(x)g)^{-1}) = \Phi(x^{-1},\omega(x^{-1})\zeta(x)g)=(\varphi(x^{-1}),f(\omega(x^{-1}))f(\zeta(x)g)).
$$
But $f(\omega(x^{-1}))f(\zeta(x)g))=\omega'(\varphi(x^{-1}))f(\zeta(x))f(g))=\omega'(\varphi(x)^{-1})\zeta'(\varphi(x))f(g)$. Thus, 
$$
\Phi((x,\omega(x)g)^{-1}) = (\varphi(x^{-1}),\omega'(\varphi(x)^{-1})\zeta'(\varphi(x))f(g)).
$$
On the other hand, 
$$
 (\Phi(x,\omega(x)g))^{-1} = (\varphi(x),\omega'(\varphi(x))f(g))^{-1} = (\varphi(x^{-1}), \omega'(\varphi(x)^{-1})\zeta'(\varphi(x))f(g)).
$$
This shows that $\Phi(x,\omega(x))^{-1} = (\Phi(x,\omega(x)g)^{-1})$ and concludes the proof that $\Phi$ is an isomorphism of graph $\Gamma$ and $\Gamma'$.
\end{proof}

This result has two immediate but useful consequences.

\begin{corollary}
\label{cor:aut1}
Suppose $(\Delta,G,\omega,\zeta)$ is a generalised voltage graph and $f$ an automorphism of the group $\Aut(G)$. Then
$$
\GC(\Delta,G,\omega,\zeta)\cong\GC(\Delta,G,f\circ \omega,f\circ \zeta).
$$ 
\end{corollary}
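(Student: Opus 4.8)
The plan is to deduce this directly from Lemma~\ref{lem:isovolt} by keeping the base graph and the group fixed. Concretely, I would set $\Delta':=\Delta$, $G':=G$, $\omega':=f\circ\omega$ and $\zeta':=f\circ\zeta$, and apply Lemma~\ref{lem:isovolt} with the graph isomorphism $\varphi:=\id_\Delta$ and the group isomorphism $f$ itself. (Here I read the hypothesis as $f\in\Aut(G)$, a group automorphism of $G$.)

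Before invoking the lemma one must check that $(\Delta,G,f\circ\omega,f\circ\zeta)$ is genuinely a generalised voltage graph, that is, that $f\circ\omega$ and $f\circ\zeta$ satisfy (\ref{eq:volt1})--(\ref{eq:volt3}). Each axiom is obtained by applying $f$ to the corresponding axiom for $(\Delta,G,\omega,\zeta)$: condition (\ref{eq:volt1}) holds because an automorphism preserves the subgroup relation, so $\omega(x)\le\omega(\beg_\Delta x)$ yields $f(\omega(x))\le f(\omega(\beg_\Delta x))$; condition (\ref{eq:volt2}) holds because $f$ intertwines conjugation, so $\omega(x)=\omega(x^{-1})^{\zeta(x)}$ gives $f(\omega(x))=f(\omega(x^{-1}))^{f(\zeta(x))}$; and condition (\ref{eq:volt3}) holds because $f$ is a homomorphism carrying $\zeta(x^{-1})\zeta(x)\in\omega(x)$ to $f(\zeta(x^{-1}))f(\zeta(x))\in f(\omega(x))$.

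With this verified, the two compatibility hypotheses of Lemma~\ref{lem:isovolt} become tautologies under the choices above: $f(\omega(x))=(f\circ\omega)(x)=\omega'(\varphi(x))$ and $f(\zeta(x))=(f\circ\zeta)(x)=\zeta'(\varphi(x))$ hold simply by the definitions of $\omega'$, $\zeta'$ and the fact that $\varphi$ is the identity. Lemma~\ref{lem:isovolt} then delivers the isomorphism $\GC(\Delta,G,\omega,\zeta)\cong\GC(\Delta,G,f\circ\omega,f\circ\zeta)$, as claimed. There is no real obstacle here, since the statement is a clean specialisation of Lemma~\ref{lem:isovolt}; the only step needing any care is axiom (\ref{eq:volt2}), where one relies on the identity $f(a^b)=f(a)^{f(b)}$ valid for any group automorphism $f$.
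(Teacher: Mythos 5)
Your proposal is correct and follows exactly the paper's own proof: the paper likewise sets $\varphi:=\id$, $\omega':=f\circ\omega$, $\zeta':=f\circ\zeta$ and invokes Lemma~\ref{lem:isovolt}. Your additional check that $(\Delta,G,f\circ\omega,f\circ\zeta)$ satisfies (\ref{eq:volt1})--(\ref{eq:volt3}) is a sound (and welcome) verification that the paper leaves implicit.
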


\begin{proof}
Set $\varphi:= \id$, $\omega':=f \circ \omega$, $\zeta':=f \circ \zeta$ and apply Lemma~\ref{lem:isovolt}.
\end{proof}

\begin{corollary}
\label{cor:aut2}
Let $(\Delta,G,\omega,\zeta)$ be a generalised voltage graph,
let $\Gamma=\GC(\Delta,G,\omega,\zeta)$,
and let $\varphi \in \Aut(\Delta)$ and $f\in \Aut(G)$ be such that
$\omega(x^\varphi) = f(\omega(x))$ for every $x\in \V(\Delta) \cup \D(\Delta)$ and
$\zeta(x^\varphi) = f(\zeta(x))$ for every $x\in \V(\Delta)$. 
Then the permutation $\Phi$ given by
$$ 
 (x,\omega(x)g)^\Phi =  (x^\varphi,\omega(x^\varphi)f(g)) \hbox{ for every } x \in \V(\Delta) \cup \D(\Delta)
$$
is an automorphism of $\Gamma$.
\end{corollary}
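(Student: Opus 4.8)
The plan is to deduce this directly from Lemma~\ref{lem:isovolt} by taking both generalised voltage graphs to be the \emph{same} one. Concretely, I would set $\Delta' := \Delta$, $G' := G$, $\omega' := \omega$ and $\zeta' := \zeta$, so that $\Gamma' = \GC(\Delta',G',\omega',\zeta') = \GC(\Delta,G,\omega,\zeta) = \Gamma$, and then feed the given $\varphi \in \Aut(\Delta)$ and $f \in \Aut(G)$ to the lemma in place of the isomorphisms it requires. The first thing to note is that an automorphism of $\Delta$ is in particular a graph isomorphism $\Delta \to \Delta'$, and an automorphism of $G$ is in particular a group isomorphism $G \to G'$; hence the structural hypotheses of Lemma~\ref{lem:isovolt} on $\varphi$ and $f$ are automatically met.

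Next I would check that the two compatibility conditions assumed in the corollary coincide with those demanded by the lemma once the primed data are identified with the unprimed data. Lemma~\ref{lem:isovolt} requires $f(\omega(x)) = \omega'(\varphi(x))$ and $f(\zeta(x)) = \zeta'(\varphi(x))$; substituting $\omega' = \omega$ and $\zeta' = \zeta$ turns these into $f(\omega(x)) = \omega(x^\varphi)$ and $f(\zeta(x)) = \zeta(x^\varphi)$, which are exactly the hypotheses $\omega(x^\varphi) = f(\omega(x))$ and $\zeta(x^\varphi) = f(\zeta(x))$ of the corollary (recall $x^\varphi = \varphi(x)$). Thus the lemma applies with no further work.

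Finally, Lemma~\ref{lem:isovolt} produces the isomorphism
$$
\Phi \colon (x,\omega(x)g) \mapsto (\varphi(x),\omega'(\varphi(x))f(g)),
$$
and since $\omega' = \omega$ this is precisely the permutation $(x,\omega(x)g)^\Phi = (x^\varphi,\omega(x^\varphi)f(g))$ appearing in the statement. Because $\Gamma' = \Gamma$, this graph isomorphism is an isomorphism of $\Gamma$ onto itself, i.e.\ an automorphism, which is the claim. I do not anticipate any genuine obstacle: the entire content is already contained in Lemma~\ref{lem:isovolt}, and the only point requiring care is matching the directions of the compatibility conditions and the notational convention $x^\varphi = \varphi(x)$. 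The one conceptual remark worth making is that, in contrast to Corollary~\ref{cor:aut1} where one takes $\varphi = \id$ and perturbs only the group side, here a nontrivial $\varphi$ is permitted precisely because it maps $\Delta$ back onto itself, keeping the target generalised cover literally equal to $\Gamma$.
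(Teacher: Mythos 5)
Your proposal is correct and is essentially the paper's own argument: the paper states Corollary~\ref{cor:aut2} as an immediate consequence of Lemma~\ref{lem:isovolt}, obtained exactly as you do by taking $\Delta'=\Delta$, $G'=G$, $\omega'=\omega$, $\zeta'=\zeta$, so that the resulting isomorphism $\Gamma\to\Gamma'=\Gamma$ is an automorphism. Your matching of the compatibility hypotheses (including the convention $x^\varphi=\varphi(x)$) is precisely the verification the paper leaves implicit.
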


\begin{remark}
Both the automorphisms arising from Lemma~\ref{lem:auto} as well as those from
Corollary~\ref{cor:aut2} preserve the partition of the vertices and darts of the generalised
covering graph into fibres. What is more, the automorphisms arising from Lemma~\ref{lem:auto}
preserve each fibre set-wise. In the case of (non-generalised) covering graphs, there
exists a nice characterisation of such automorphisms of the covering graph; see \cite{lift}, for example.
In the case of generalised covering graphs, such a nice characterisation seems to be out of the reach.
\end{remark}

\section{Normalised voltages}
\label{sec:normal}

It this section we first show that the generalised voltage assignment $\zeta$ can always be assumed to
be trivial on the darts of any fixed spanning tree of the base graph $\Delta$. This is the analogue of
a well-known result in the theory of usual (non-generalised) voltage graphs (see, \cite[p.~91]{grosstuc}, for example).

\begin{lemma}
\label{lem:trivialdart}
Let $(\Delta,G,\omega,\zeta)$ be a generalised voltage graph and let $x \in \D(\Delta)$ such that $\beg x \not =  \term x $.
 Then there exists a voltage assignment $\zeta_x$ assigning the trivial element $1_G$ to $x$ and a weight function $\omega_x$ such that $\GC(\Delta,G,\omega,\zeta) \cong \GC(\Delta,G,\omega_x,\zeta_x)$.
\end{lemma}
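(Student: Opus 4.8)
The plan is to eliminate the voltage on $x$ by a local ``gauge transformation'' at the endvertex $\term x$, in the spirit of the classical result that voltages can be made trivial on a spanning tree. The key observation is that in Definition~\ref{def:gencov} the voltage $\zeta(x)$ enters only through the cosets $\omega(\term x)\zeta(x)g$ appearing in the $\inv_\Gamma$-formula for darts in $\fib(x)$ (and, symmetrically, in $\fib(x^{-1})$). So I would look for a new voltage assignment $\zeta_x$ that differs from $\zeta$ only on the darts $x$ and $x^{-1}$, together with a weight function $\omega_x$ possibly differing from $\omega$ only at $x$ and $x^{-1}$, arranged so that $\zeta_x(x)=1_G$.

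\medskip

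First I would use Remark~\ref{rem:inverse} (and Lemma~\ref{lem:inverse}, available since $\beg x\neq\term x$ forces $x\neq x^{-1}$) to assume at the outset that $\zeta(x^{-1})=\zeta(x)^{-1}$, which lets me treat the two darts underlying the edge $\{x,x^{-1}\}$ symmetrically. The natural candidate is to set
$$
\zeta_x(x):=1_G,\qquad \zeta_x(x^{-1}):=1_G,
$$
to leave $\zeta_x(y):=\zeta(y)$ for every other dart $y$, and to define the new weight at $x$ by conjugating: $\omega_x(x):=\omega(x)^{\zeta(x)^{-1}}=\omega(x^{-1})$, while keeping $\omega_x(y):=\omega(y)$ for all $y\neq x$ (the weight $\omega(x^{-1})$ need not change, since by symmetry $\omega(x)^{\zeta_x(x)}=\omega(x)=\omega(x^{-1})^{\zeta(x)}$ will have to be checked against conditions~(\ref{eq:volt1})--(\ref{eq:volt3})). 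The underlying idea is that I am relabelling the fibre over $\term x$ by right-translating it, which absorbs the voltage $\zeta(x)$ into a change of coset representatives.

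\medskip

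The verification splits into two parts. The routine part is checking that $(\Delta,G,\omega_x,\zeta_x)$ is again a generalised voltage graph, i.e.\ that the modified data satisfy (\ref{eq:volt1})--(\ref{eq:volt3}); condition (\ref{eq:volt1}) at $x$ requires $\omega_x(x)\le\omega(\beg x)$, which is exactly where I must be careful, since $\omega(x)^{\zeta(x)^{-1}}$ need not sit inside $\omega(\beg x)$. This is the main obstacle: the weight at $x$ is constrained from above by $\omega(\beg x)$, and conjugating by $\zeta(x)^{-1}$ can move it out of that subgroup. I expect the correct fix is to simultaneously adjust the coset labels at $\term x$ by composing with the isomorphism of Lemma~\ref{lem:isovolt}, or more concretely to build an explicit bijection $\Phi\colon\V(\Gamma)\cup\D(\Gamma)\to\V(\Gamma_x)\cup\D(\Gamma_x)$ that is the identity on every fibre except $\fib(\term x)$ and $\fib(x^{-1})$, where it sends $(\term x,\omega(\term x)g)\mapsto(\term x,\omega(\term x)\zeta(x)g)$ and correspondingly on darts. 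The second, genuinely computational part is then to verify that this $\Phi$ intertwines $\beg$ and $\inv$ with the new operators; this is a direct coset calculation entirely analogous to the proof of Lemma~\ref{lem:isovolt}, using $\zeta(x^{-1})=\zeta(x)^{-1}$ to handle the dart $x^{-1}$. Once $\Phi$ is seen to commute with $\beg$ and $\inv$, it is an isomorphism $\GC(\Delta,G,\omega,\zeta)\cong\GC(\Delta,G,\omega_x,\zeta_x)$, completing the proof.
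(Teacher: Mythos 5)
Your overall strategy (a gauge transformation at $v:=\term x$ that absorbs $\zeta(x)$ into a relabelling of coset representatives) is the same as the paper's, and you correctly locate the obstruction: with $\omega_x(x):=\omega(x)^{\zeta(x)^{-1}}=\omega(x^{-1})$, condition~(\ref{eq:volt1}) fails because $\omega(x^{-1})$ need not lie in $\omega(\beg x)$. But the fix you sketch --- a bijection $\Phi$ that is the identity on every fibre except $\fib(v)$ and $\fib(x^{-1})$ --- does not work, and this is a genuine gap, not a routine verification. Two things break. First, the map $(v,\omega(v)g)\mapsto(v,\omega(v)\zeta(x)g)$ is not well defined on right cosets: $\omega(v)g=\omega(v)h$ means $gh^{-1}\in\omega(v)$, while $\omega(v)\zeta(x)g=\omega(v)\zeta(x)h$ requires $gh^{-1}\in\omega(v)^{\zeta(x)}$, so the map is well defined only when $\zeta(x)$ normalises $\omega(v)$. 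Relabelling the fibre over $v$ forces you to replace the weight of the vertex $v$ itself by $\omega(v)^{\zeta(x)}$, which your $\omega_x$ does not do. Second, even granting well-definedness, $\Phi$ cannot be the identity on the fibre of any other dart $y\neq x^{-1}$ emanating from $v$: intertwining $\beg$ would force $\beg_{\Gamma_x}(y,\omega(y)g)=(v,\omega(v)g)$ to equal $\Phi(\beg_\Gamma(y,\omega(y)g))=(v,\omega(v)\zeta(x)g)$, i.e.\ $\zeta(x)\in\omega(v)$, which is false in general. So the modification cannot be confined to $\{x,x^{-1}\}$; it must touch everything incident with $v$.

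This propagation is exactly what the paper's proof does: it conjugates by $\zeta(x)$ the weights of $v$ and of \emph{every} dart emanating from $v$ (so in particular $\omega_x(x^{-1})=\omega(x^{-1})^{\zeta(x)}=\omega(x)$ by~(\ref{eq:volt2}), and all instances of~(\ref{eq:volt1}) then hold automatically), and it replaces $\zeta(y)$ by $\zeta(y)\zeta(x)$ for every dart $y$ with $\beg y=v$ and by $\zeta(x)^{-1}\zeta(y)$ for every dart $y$ with $\term y=v$; in particular $\zeta_x(x)=\zeta(x)^{-1}\zeta(x)=1_G$. The isomorphism then multiplies the group coordinate by $\zeta(x)^{-1}$ precisely on the fibres over $v$ and over the darts emanating from $v$, and is the identity elsewhere. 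Note also that Lemma~\ref{lem:isovolt} cannot supply the fix, since it requires a single group automorphism $f$ relating \emph{all} weights and voltages at once, whereas the change needed here is local to the closed neighbourhood of $v$; and your preliminary normalisation $\zeta(x^{-1})=\zeta(x)^{-1}$ via Remark~\ref{rem:inverse}, while harmless, becomes unnecessary once the transformation is set up this way.
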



\begin{proof}
Let $v := \term x$ and note that by assumption, $\beg x \not = v$. Let  $\beg:=\beg_\Delta$.
Now define $\zeta_x: \D(\Delta) \to G$ and $\omega_x: \D(\Delta) \cup \V(\Delta) \to S(G)$ as follows: 
$$
\zeta_x(y) = \left\{
        \begin{array}{ll}        	
            \zeta(y)\zeta(x) & \quad \hbox{ if } \beg y = v;\\
            \zeta(x)^{-1}\zeta(y) & \quad \hbox{ if } \term y = v;\\
            \zeta(y) & \quad \text{otherwise}.
        \end{array}
    \right.
$$
$$
\omega_x(z) = \left\{
        \begin{array}{ll}        	
            \omega(z)^{\zeta(x)} & \quad \hbox{ if } v \in \{z, \beg z\};\\
            \omega(z) & \quad \text{otherwise}.
        \end{array}
    \right.
$$
Let us show that $(\Delta,G,\omega_x,\zeta_x)$ is a generalised voltage graph. Clearly $\omega_x(y) \leq \omega_x(\beg y)$ for all $y \in \D(\Delta)$, so condition (\ref{eq:volt1}) holds. 

Let us show that (\ref{eq:volt2}) holds, that is, that $\omega_x(y) = \omega_x(y^{-1})^{\zeta_x(y)}$ for every $y\in \D(\Delta)$.
If $\beg y =v$, then $\omega_x(y^{-1})^{\zeta_x(y)} = \omega(y^{-1})^{\zeta(y)\zeta(x)} = \omega(y)^{\zeta(x)} = \omega_x(y)$.
Suppose now that $\term y= v$.  Since $\beg y^{-1} =v$, we see that $\omega_x(y^{-1})=\omega(y^{-1})^{\zeta(x)}$.
Therefore $\omega_x(y^{-1})^{\zeta_x(y)} = \omega_x(y^{-1})^{\zeta(x)^{-1}\zeta(y)} = \omega(y^{-1})^{\zeta(y)}$.
Finally, if neither $\beg y$ nor $\term y$ equals $v$, then $\omega_x(y^{-1})^{\zeta_x(y)} = \omega(y^{-1})^{\zeta(y)} = \omega(y) = \omega_x(y)$.
We thus see that (\ref{eq:volt2}) holds for every $y\in \D(\Delta)$.

Now, let us show that the condition (\ref{eq:volt3}) is satisfied, that is, that for every $y\in \D(\Delta)$, we have
$\zeta_x(y^{-1})\zeta_x(y) \in \omega_x(y)$.
 If $\beg y = v$, then $\zeta_x(y^{-1})\zeta_x(y) = \zeta(x)^{-1}\zeta(y^{-1})\zeta(y)\zeta(x)$. But $\zeta(y^{-1})\zeta(y) \in \omega(y)$, 
 therefore $\zeta_x(y^{-1})\zeta_x(y) \in \omega(y)^{\zeta(x)}$, and since $\omega(y)^{\zeta(x)} = \omega_x(y)$, 
 we see that $\zeta_x(y^{-1})\zeta_x(y) \in \omega_x(y)$. 
 If $\term y = v$, then $\zeta_x(y^{-1})\zeta_x(y) = \zeta(y^{-1})\zeta(x)\zeta(x)^{-1}\zeta(y) = \zeta(y^{-1})\zeta(y)$. Since $\zeta(y^{-1})\zeta(y) \in \omega(y)$ and $\omega(y) = \omega_x(y)$, we see that $\zeta(y^{-1})\zeta(y) \in \omega_x(y)$. 
Finally, if neither $\beg y$ nor $\term y$ equals $v$, then $\zeta_x(y^{-1})\zeta_x(y) = \zeta(y^{-1})\zeta(y) \in \omega(y) = \omega_x(y)$.
 This concludes the proof of  (\ref{eq:volt3}) and shows that $(\Delta,G,\omega_x,\zeta_x)$ is a generalised voltage graph.     

Now let $\Gamma := \GC(\Delta,G,\omega,\zeta)$ and $\Gamma' := \GC(\Delta,G,\omega_x,\zeta_x)$.
 Define $\varphi: \V(\Gamma) \cup \D(\Gamma) \to \V(\Gamma') \cup \D(\Gamma')$ by
$$
\varphi((z,\omega(z)g)) = \left\{
        \begin{array}{ll}
        	(z,\omega_x(z)\zeta(x)^{-1}g) &  \> \hbox{ for } \> z  \in \V(\Gamma) \cup \D(\Gamma), \> v\in \{z, \beg z\}; \\
            (z,\omega_x(z)g) & \>  \hbox{ for } \> z \in \V(\Gamma) \cup \D(\Gamma) ,\> v\not \in \{z, \beg z\}.
        \end{array}
    \right.
$$

We first show that $\varphi$ is well defined and injective. 
Let $a, b \in G$ and let $z \in \V(\Delta) \cup \D(\Delta)$.
We need to show that $\varphi(z,\omega(z)a) = \varphi(z,\omega(z)b)$ if only if $(z,\omega(z)a) = (z,\omega(z)b)$.
If $v\not \in \{z, \beg z\}$, then $\varphi(z,\omega(z)a) = (z,\omega_x(z)a) =(z,\omega(z)a) $ and
$\varphi(z,\omega(x)b) = (z,\omega_x(z)b)=(z,\omega(z)b)$ and the claim follows.
Assume now that $v\in \{z, \beg z\}$. Note that by the definition of $\omega_x$, 
it then follows that $\omega_x(z)=\omega(z)^{\zeta(x)}$.
Hence $ab^{-1} \in \omega(z)$ holds if and only if $ab^{-1} \in \omega_x(z)^{\zeta(x)^{-1}}$ holds, and therefore
 $\omega(z)a = \omega(z)b$ holds  if and only if $\omega_x(z)^{\zeta(x)^{-1}}a = \omega_x(z)^{\zeta(x)^{-1}}b$ holds
 if and only if  $\omega_x(z){\zeta(x)^{-1}}a = \omega_x(z){\zeta(x)^{-1}}b$ holds,
  and thus if and only if $\varphi(z,\omega(x)a) = \varphi(z,\omega(x)b)$ holds.
We have thus shown that $\varphi$ is well defined and injective.

To prove surjectivity of $\varphi$, take an arbitrary $(z,\omega_x(z)g) \in \V(\Gamma')\cup \D(\Gamma')$. If
$v\not \in \{z, \beg z\}$, then $(z,\omega_x(z)g) = \varphi(z,\omega(z)g)$ and thus $(z,\omega_x(z)g)$ is in the image of $\varphi$.
Similarly, if $v\in \{z, \beg z\}$, then $(z,\omega_x(z)g) = \varphi(z,\omega(z)\zeta(x)^{-1}g)$, proving that $\varphi$ is surjective.

It remains to show that $\varphi$ intertwines the functions $\beg_\Gamma$ and $\beg_{\Gamma'}$ as well as the functions
$\inv_\Gamma$ and $\inv_{\Gamma'}$.
Let us first show that $\varphi$ intertwines $\beg_\Gamma$ and $\beg_{\Gamma'}$.
If  $\beg y = v$, then:
\begin{eqnarray*}
\varphi(\beg_\Gamma (y,\omega(y)g))&=& \varphi((\beg y,\omega(\beg y)g))\\
&=& (\beg y,\omega_x(\beg  y)\zeta(x)^{-1}g)\\
&=& \beg_{\Gamma'} (y,\omega_x(y)\zeta(x)^{-1}g) \\
&=& \beg_{\Gamma'}(\varphi(y,\omega(y)g)).
\end{eqnarray*}

On the other hand, if $\beg y \not = v$, then:
\begin{eqnarray*}
\varphi(\beg_\Gamma(y,\omega(y)g)) &=& \varphi((\beg y,\omega(\beg y)g)) \\
&=& (\beg y,\omega_x(\beg y)g)\\
&=& \beg_{\Gamma'}(y,\omega_x(y)g)\\
&=& \beg_{\Gamma'}(\varphi(y,\omega(y)g)).
\end{eqnarray*}

Let us now show that $\varphi$ intertwines 

 $\inv_\Gamma$ and $\inv_{\Gamma'}$. 
If  $\term y =v$, then:
\begin{eqnarray*}
\varphi((y,\omega(y)g)^{-1}) &=& \varphi((y^{-1},\omega(y^{-1})\zeta(y)g)) \\
&=& (y^{-1},\omega_x(y^{-1})\zeta(x)^{-1}\zeta(y)g)\\
&=& (y^{-1}, \omega_x(y^{-1})\zeta_x(y)g)\\
&=& (y, \omega_x(y)g)^{-1}\\
&=& \varphi(y,\omega(y)g)^{-1}.
\end{eqnarray*}

Similarly, if  $\beg y  =v$, then:
\begin{eqnarray*}
\varphi((y,\omega(y)g)^{-1}) &=& \varphi((y^{-1},\omega(y^{-1})\zeta(y)g)) \\
&=& (y^{-1},\omega_x(y^{-1})\zeta(y)g)\\
&=& (y^{-1}, \omega_x(y^{-1})\zeta_x(y)\zeta(x)^{-1}g)\\
&=& (y, \omega_x(y)\zeta(x)^{-1}g)^{-1}\\
&=& \varphi(y,\omega(y)g)^{-1}.
\end{eqnarray*}

Finally, if  $v\not\in \{\beg y, \term y\}$, then:
\begin{eqnarray*}
\varphi((y,\omega(y)g)^{-1}) &=& \varphi((y^{-1},\omega(y^{-1})\zeta(y)g)) \\
&=& (y^{-1},\omega_x(y^{-1})\zeta(y)g)\\
&=& (y^{-1}, \omega_x(y^{-1})\zeta_x(y)g)\\
&=& (y, \omega_x(y)g)^{-1}\\
&=& \varphi(y,\omega(y)g)^{-1}.
\end{eqnarray*}
 
Therefore $\varphi$ is an isomorphism and $\Gamma \cong \Gamma'$, as claimed.
\end{proof}

\begin{remark}
\label{rem:volt1}
The voltage assignments $\zeta$ and $\zeta_x$ of Lemma \ref{lem:trivialdart} coincide in all darts of $\Delta$, except in those whose initial vertex is $v$.
\end{remark}

\begin{theorem}
\label{lem:normalized}
Let $(\Delta,G,\omega,\zeta)$ be a voltage graph and let $T$ be a spanning tree of $\Delta$. Then there exists a voltage assignment $\zeta'$ and a weight function $\omega'$ such that $\zeta'(x)=1_G$ for all $x \in \D(T)$ and $\GC(\Delta,G,\omega,\zeta) \cong \GC(\Delta,G,\omega',\zeta')$.
\end{theorem}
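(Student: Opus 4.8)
The plan is to trivialise the voltages on the darts of $T$ one edge at a time, each time applying Lemma~\ref{lem:trivialdart}, and to choose the order in which the edges are treated so that trivialising one edge never spoils an edge already made trivial. Since each application of Lemma~\ref{lem:trivialdart} replaces $(\omega,\zeta)$ by data with an isomorphic generalised cover, composing all these isomorphisms will give $\GC(\Delta,G,\omega,\zeta)\cong\GC(\Delta,G,\omega',\zeta')$ for the final data $(\omega',\zeta')$, and it will only remain to check that $\zeta'$ vanishes on every dart of $T$.

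First I would arrange that a single application of Lemma~\ref{lem:trivialdart} trivialises \emph{both} darts of an edge. By Remark~\ref{rem:inverse} we may assume at the outset that $\zeta(y^{-1})=\zeta(y)^{-1}$ for every non-semiedge dart $y$; as $T$ is a tree this holds in particular for all darts of $T$. If $x$ is a dart with $v:=\term x$ and we apply Lemma~\ref{lem:trivialdart} to $x$, then the defining formula gives $\zeta_x(x)=\zeta(x)^{-1}\zeta(x)=1_G$, and since $\beg x^{-1}=v$ it also gives $\zeta_x(x^{-1})=\zeta(x^{-1})\zeta(x)=\zeta(x)^{-1}\zeta(x)=1_G$. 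The same computation applied to an arbitrary dart $y$ with $\beg y=v$ and to $y^{-1}$ shows that the relation $\zeta_x(y^{-1})=\zeta_x(y)^{-1}$ is inherited, so the normalisation is preserved on all non-loop darts at each step and remains available when later edges are treated. (Alternatively, one may follow each use of Lemma~\ref{lem:trivialdart} by an application of Lemma~\ref{lem:inverse} to set $\zeta(x^{-1}):=\zeta(x)^{-1}=1_G$, which avoids tracking the normalisation.)

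Next I would fix the order. Root $T$ at a vertex $r$ and fix a linear order on $\V(\Delta)$ having $r$ as least element in which every vertex precedes all of its children (for instance, order by nondecreasing distance from $r$). For each non-root vertex $w$ let $x_w$ be the dart of $T$ with $\beg x_w=\mathrm{parent}(w)$ and $\term x_w=w$; since $T$ has no loops, $\beg x_w\neq\term x_w$, so Lemma~\ref{lem:trivialdart} applies to $x_w$. I would then run through the non-root vertices $w$ in increasing order, at each stage applying Lemma~\ref{lem:trivialdart} to $x_w$ with pivot $v=w$; by the previous paragraph this makes both $x_w$ and $x_w^{-1}$ carry trivial voltage.

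The heart of the argument is to check that an edge, once trivialised, is never disturbed again. For this I would invoke the fact (Remark~\ref{rem:volt1}, or directly the defining formula for $\zeta_x$) that applying Lemma~\ref{lem:trivialdart} with pivot $w$ modifies $\zeta$ only on darts incident with $w$. The darts of an edge $\{x_{w_0},x_{w_0}^{-1}\}$ treated at an earlier stage are incident only with $w_0$ and $\mathrm{parent}(w_0)$; both of these vertices precede $w$ in the chosen order (the former because its stage came earlier, the latter because it precedes $w_0$), so neither equals the current pivot $w$, and the edge is left untouched. Consequently, after every non-root vertex has been treated, all darts of $T$ carry trivial voltage, giving the required $\zeta'$. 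The only real difficulty is this bookkeeping: one must respect the parent-before-child discipline so that the pivots used after an edge is trivialised avoid both of that edge's endvertices; the remaining verifications are immediate from Lemma~\ref{lem:trivialdart} and Lemma~\ref{lem:inverse}.
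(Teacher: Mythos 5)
Your proof is correct and takes essentially the same route as the paper: the paper proceeds by induction on the size of a subtree, trivialising one leaf edge at a time via Lemma~\ref{lem:trivialdart} together with Remark~\ref{rem:inverse} and Remark~\ref{rem:volt1}, and your rooted, parent-before-child iteration is exactly the unfolding of that induction. Your explicit check that both darts of each tree edge become trivial, and that the inverse-normalisation survives each step on non-loop darts, is a slightly more careful rendering of what the paper leaves implicit.
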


\begin{proof}
Let $(\Delta,G,\omega,\zeta)$ be a voltage graph. We will show that for any subgraph $T$ of $(\Delta,G,\omega,\zeta)$ that is isomorphic to a tree, there is a voltage assignment $\zeta^{T}$ and a weight function $\omega^{T}$ such that $\zeta^{T}$ is trivial on all darts of $T$ and $\GC(\Delta,G,\omega,\zeta) \cong \GC(\Delta,G,\omega^T,\zeta^T)$. We proceed by induction on the size (the number of edges) of $T$. Note that if $T$ has only one edge, then the result follows at once from Lemma \ref{lem:trivialdart} and Remark~\ref{rem:inverse}. Now, suppose the result holds for any tree of size $k$, $k<|\V(\Delta)|-1$. Let $T$ be a spanning tree of $\Delta$ and notice that $T$ has $|\V(\Delta)|-1$ edges. Let $v$ be a leaf (a vertex of valency $1$) of $T$ and let $u$ be the unique vertex of $T$ adjacent to $v$. Then there is a unique dart $x$ in $T$ such that $\beg x = u$ and $\term x = v$. Let $T'$ be the tree obtained by deleting the vertex $v$ from $T$. Then, by induction hypothesis there is a weight function $\omega^{T'}$ and a voltage assignment $\zeta^{T'}$ such that $\zeta^{T'}$ is trivial on all dart in $T'$ and $\GC(\Delta,G,\omega,\zeta) \cong \GC(\Delta,G,\omega^{T'},\zeta^{T'})$. By Lemma \ref{lem:trivialdart}, there exists a weight function $\omega^T$ and a voltage assignment $\zeta^T$ such that $\zeta^T(x) =  1_G$ and $\GC(\Delta,G,\omega^{T'},\zeta^{T'}) \cong \GC(\Delta,G,\omega^{T},\zeta^{T})$. Moreover, by Remark \ref{rem:volt1}, $\zeta^T(y) = \zeta^{T'}(y)$ for all $y \in \D(T)\setminus\{x,x^{-1}\}$. Therefore, $\zeta^T$ is trivial on all darts of $T$ and $\GC(\Delta,G,\omega,\zeta) \cong \GC(\Delta,G,\omega^{T'},\zeta^{T'}) \cong \GC(\Delta,G,\omega^{T},\zeta^{T})$.
\end{proof}

\begin{remark}
\label{rem:Tnormal}
Let $T$ be a spanning tree of a graph $\Delta$. A generalised voltage graph $(\Delta,G,\omega,\zeta)$ where $\zeta(x) = 1$ for every dart of $T$ is said to be {\em $T$-normalised}. In light of Theorem \ref{lem:normalized} we can always assume a voltage graph to be $T$-normalised for a prescribed spanning tree $T$. This will often prove to be useful as it makes calculations simpler.
\end{remark}

\begin{remark}
\label{rem:fib}
Observe that the isomorphism between $\GC(\Delta,G,\omega,\zeta)$ and $\GC(\Delta,G,\omega_x,\zeta_x)$ provided in the proof of Lemma~\ref{lem:trivialdart} mapped fibres of the first generalised cover to the fibres of the second generalised cover. Consequently, also the isomorphism
between the graphs $\GC(\Delta,G,\omega,\zeta)$ and $\GC(\Delta,G,\omega',\zeta')$ in
Theorem~\ref{lem:normalized} can be chosen to map fibres to fibres.
\end{remark}

\section{Connectivity}
\label{sec:con}

The aim of this section is a characterisation of connectivity of the generalised covering graph $\GC(\Delta,G,\omega,\zeta)$ in terms of the generalised voltage graph $(\Delta,G,\omega,\zeta)$. The main result of the section is Theorem~\ref{lem:connect}.

\begin{lemma}
\label{lem:neighbours}
Let $\Gamma=\GC(\Delta,G,\omega,\zeta)$ and let $(u,\omega(u)g)$ be a vertex of $\Gamma$. Then the set of neighbours of $(u,\omega(u)g)$ is the set $\{(\term_\Delta x,\omega(\term_\Delta x)zg) \mid z \in \zeta(x)\omega(u), \beg_\Delta x =u \}$.
\end{lemma}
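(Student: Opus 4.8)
The plan is to compute the neighbours of the vertex $(u, \omega(u)g)$ directly from the definitions of $\beg_\Gamma$ and $\inv_\Gamma$ given in Definition~\ref{def:gencov}. By definition, a neighbour of a vertex $w$ is the end $\term_\Gamma X$ of a dart $X$ emanating from $w$, i.e.\ a dart $X$ with $\beg_\Gamma X = w$. So first I would identify exactly which darts of $\Gamma$ emanate from $(u,\omega(u)g)$, and then apply the end-of-a-dart formula recorded in the Remark following Lemma~\ref{lem:OK}.

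The key steps are as follows. A dart of $\Gamma$ has the form $(x, \omega(x)h)$ for some $x \in \D(\Delta)$ and $h \in G$. By the definition of $\beg_\Gamma$, we have $\beg_\Gamma(x,\omega(x)h) = (\beg_\Delta x, \omega(\beg_\Delta x)h)$, so this equals $(u, \omega(u)g)$ precisely when $\beg_\Delta x = u$ and $\omega(u)h = \omega(u)g$, that is, when $\beg_\Delta x = u$ and $h \in \omega(u)g$. Thus the darts emanating from $(u,\omega(u)g)$ are exactly those of the form $(x, \omega(x)h)$ with $\beg_\Delta x = u$ and $h \in \omega(u)g$. For each such dart I would then use the formula $\term_\Gamma(x,\omega(x)h) = (\term_\Delta x, \omega(\term_\Delta x)\zeta(x)h)$ to read off the corresponding neighbour.

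It remains to check that, as $x$ ranges over $\Delta(u)$ and $h$ ranges over $\omega(u)g$, the resulting set of ends $(\term_\Delta x, \omega(\term_\Delta x)\zeta(x)h)$ coincides with the claimed set $\{(\term_\Delta x, \omega(\term_\Delta x)zg) \mid z \in \zeta(x)\omega(u),\ \beg_\Delta x = u\}$. Writing $h = wg$ with $w \in \omega(u)$, the second coordinate becomes $\omega(\term_\Delta x)\zeta(x)wg$, and since $w$ ranges over $\omega(u)$, the element $\zeta(x)w$ ranges over the set $\zeta(x)\omega(u)$; setting $z = \zeta(x)w$ gives exactly the claimed description. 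This bookkeeping is essentially a change of parametrisation and is the only substantive point to verify.

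The main (and rather mild) obstacle is just a well-definedness subtlety: one must make sure that replacing the coset representative $h$ of $\omega(u)g$ by another representative does not change the neighbour, and dually that the parametrisation by $z \in \zeta(x)\omega(u)$ is consistent with the coset $\omega(\term_\Delta x)\zeta(x)h$ appearing in the second coordinate. Both follow from the fact that $\omega(x) \leq \omega(u)$ together with the defining relation $\omega(x) = \omega(x^{-1})^{\zeta(x)}$, exactly as in the well-definedness argument of Lemma~\ref{lem:OK}; I would invoke that lemma rather than redo the coset computation from scratch. No genuinely hard step is expected here — the statement is a direct unwinding of Definition~\ref{def:gencov}.
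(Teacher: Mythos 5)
Your proposal is correct and follows essentially the same route as the paper's proof: identify the darts emanating from $(u,\omega(u)g)$ via the definition of $\beg_\Gamma$, apply the end-of-dart formula $\term_\Gamma(x,\omega(x)h) = (\term_\Delta x, \omega(\term_\Delta x)\zeta(x)h)$, and reparametrise $h = wg$, $z = \zeta(x)w$ to obtain the stated set. The only difference is cosmetic: you spell out the coset reparametrisation and the well-definedness appeal to Lemma~\ref{lem:OK}, which the paper leaves implicit.
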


\begin{proof}
 Notice that the neighbourhood of $(u,\omega(u)g)$ is precisely $\{ \term_\Gamma (x,\omega(x)h) \mid \beg (x,\omega(x)h) = (u,\omega(u)g)\}$. Now, $\beg_\Gamma (x,\omega(x)h) = (u,\omega(u)g)$ if and only $\beg_\Delta x =u$ and $h \in \omega(u)g$. Since $\term_\Gamma (x,\omega(x)h) = (\term_\Delta x, \omega(\term_\Delta x)\zeta(x)h)$, we see that the neighbourhood of $(u,\omega(u)g)$ consists of all vertices of the form $(\term_\Delta x,\omega(\term_\Delta x)zg)$ where $z \in \zeta(x)\omega(u)$ and $\beg_\Delta x = u$.
\end{proof}

Let $\Gamma=\GC(\Delta,G,\omega,\zeta)$ 
 and let $W=(x_0,x_1,...,x_{n-1})$ be a walk of length $n$ in $\Delta$. Let $v_i = \beg_\Delta(x_i)$,  for $i \in \{0,...,n-1\}$, and let $v_n = \term(x_{n-1})$. We define the {\em voltage of $W$}, denoted $\zeta(W)$, as
 \begin{equation}
 \label{eq:zetaW}
\zeta(W) = \prod_{i=0}^{n-1} \zeta(x_{n-1-i})\omega(v_{n-1-i}).
\end{equation}
Notice that the voltage of a dart is an element of $G$ whereas the voltage of a walk is a subset of $G$, even if such walk consists of a single dart. 

Let $\Gamma=\GC(\Delta,G,\omega,\zeta)$ and let $\varphi \colon \Gamma \to \Delta$ be the associated  generalised covering projection. If $(x_0, x_1, \ldots, x_{n-1})$ is a walk in $\Gamma$, then clearly its {\em projection} $(\varphi(x_0), \varphi(x_1), \ldots, \varphi(x_{n-1}))$ is
a walk in $\Delta$.

\begin{lemma}
\label{lem:endvertex}
Let  $(\Delta,G,\omega,\zeta)$ be a voltage graph with generalized cover $\Gamma$ and let $W$ be a $uv$-walk of length $n$ in $\Delta$. If $\lin{\mathcal{W}}$ is the set of walks of length $n$ in $\Gamma$ that start at the vertex $(u,\omega(u))$ and project to $W$, then the set of final vertices of walks in $\lin{\mathcal{W}}$ is precisely $\{(v,\omega(v)z): z \in \zeta(W)\}$.  
\end{lemma}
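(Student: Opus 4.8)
The plan is to argue by induction on the length $n$ of $W$, using Lemma~\ref{lem:neighbours} as the engine that governs how a partial lift is extended by one dart. Throughout, recall that a walk of $\lin{\mathcal{W}}$ is a sequence $(\tilde{x}_0,\dots,\tilde{x}_{n-1})$ of darts of $\Gamma$ with $\tilde{x}_i\in\fib(x_i)$ forming a walk that starts at $(u,\omega(u))$; such a lift is determined by its sequence of darts, and deleting its last dart yields a lift of the walk obtained by truncating $W$. For the base case $n=0$ the walk $W$ is empty (so $v=u$), the only lift is the empty walk with final vertex $(u,\omega(u))$, and the empty product in (\ref{eq:zetaW}) is $\{1_G\}$, so the claimed set $\{(v,\omega(v)z): z\in\zeta(W)\}$ is exactly $\{(u,\omega(u))\}$, as required.

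For the inductive step, write $W = W' x_{n-1}$ with $W' = (x_0,\dots,x_{n-2})$ of length $n-1$, set $w := v_{n-1} = \beg_\Delta x_{n-1}$ (the final vertex of $W'$), and note $\term_\Delta x_{n-1} = v$. Every lift in $\lin{\mathcal{W}}$ is obtained by taking a lift of $W'$ starting at $(u,\omega(u))$, whose final vertex by the induction hypothesis is some $(w,\omega(w)z')$ with $z'\in\zeta(W')$, and then appending a dart of $\fib(x_{n-1})$ emanating from that vertex; conversely every such choice produces a lift in $\lin{\mathcal{W}}$, and the final vertex of the extended lift is the endpoint of the appended dart. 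Applying Lemma~\ref{lem:neighbours} to the vertex $(w,\omega(w)z')$ and restricting to darts over $x_{n-1}$, the attainable endpoints are exactly $\{(v,\omega(v)zz') : z\in\zeta(x_{n-1})\omega(w)\}$. Taking the union over all admissible $z'\in\zeta(W')$, the set of final vertices of $\lin{\mathcal{W}}$ becomes $\{(v,\omega(v)zz') : z\in\zeta(x_{n-1})\omega(w),\ z'\in\zeta(W')\}$, and since $zz'$ ranges exactly over the product $\zeta(x_{n-1})\omega(w)\cdot\zeta(W')$, which by (\ref{eq:zetaW}) is precisely $\zeta(W)$, this set equals $\{(v,\omega(v)z):z\in\zeta(W)\}$.

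The point that requires care is the bookkeeping in the non-commutative product defining $\zeta(W)$: in (\ref{eq:zetaW}) the factor $\zeta(x_{n-1})\omega(v_{n-1})$ coming from the last dart sits at the \emph{left} end, and correspondingly Lemma~\ref{lem:neighbours} prepends the new factor $z$ on the left of the existing coset representative $z'$, so the orders match and the identity $\zeta(W) = \zeta(x_{n-1})\omega(w)\cdot\zeta(W')$ holds exactly. Both set inclusions then follow simultaneously from this identity: every final vertex has the stated form, and conversely each element of $\zeta(W)$ factors as $zz'$ with $z'\in\zeta(W')$ realised by an actual lift of $W'$ (induction hypothesis) and $z\in\zeta(x_{n-1})\omega(w)$ realised by an actual choice of appended dart (Lemma~\ref{lem:neighbours}), so every vertex $(v,\omega(v)z)$ with $z\in\zeta(W)$ is indeed attained.
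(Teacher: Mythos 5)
Your proof is correct and takes essentially the same route as the paper's: induction on the length of $W$, peeling off the last dart $x_{n-1}$, and using Lemma~\ref{lem:neighbours} together with the factorisation $\zeta(W)=\zeta(x_{n-1})\omega(v_{n-1})\cdot\zeta(W')$ coming from~(\ref{eq:zetaW}). The only cosmetic differences are that you anchor the induction at $n=0$ (the empty walk) rather than $n=1$, and that you invoke Lemma~\ref{lem:neighbours} restricted to the fibre of $x_{n-1}$ where the paper computes directly with the formula for $\term_\Gamma$.
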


\begin{proof}
We proceed by induction on the length of $W$. If $W$ is a walk of length $1$, then the result follows at once from Lemma \ref{lem:neighbours}. Suppose Lemma \ref{lem:endvertex} holds for all walks of length at most $n$, and let $W=(x_0,x_1,...,x_n)$. Let $v_i = \beg x_i$ for all $i \in \{0,...,n\}$ and let $v_{n+1} = \term x_n$. Denote by $\lin{\mathcal{W}}$ the set of walks of length $n+1$ in $\Gamma$ that project to $W$. Let $W'=(x_0,x_1,...,x_{n-1})$ and let $\lin{\mathcal{W'}}$ the set of walks of length $n$ in $\Gamma$ that project to $W'$. Define $F$ and $F'$ as the set of final vertices of $\lin{\mathcal{W}}$ and $\lin{\mathcal{W'}}$, respectively. Then
$$F = \{\term (x,\omega(x)h) \mid \beg (x,\omega(x)h) \in F'\}.$$
But $F' = \{(v_n,\omega(v_n)z) \mid z \in \zeta(W')\}$ since $W'$ has length $n$. Then $\beg (x,\omega(x)h) \in F'$ if and only if $h \in \omega(v_n)\zeta(W')$. Thus
\begin{eqnarray*}
F &=& \{\term (x,\omega(x)h) \mid h \in \omega(v_n)\zeta(W')\}\\
&=& \{(v_{n+1},\omega(v_{n+1})\zeta(x_n)h) \mid h \in \omega(v_n)\zeta(W')\}\\
&=& \{(v_{n+1},\omega(v_{n+1})z) \mid z \in \zeta(x_n)\omega(v_n)\zeta(W')\}\\
&=& \{(v_{n+1},\omega(v_{n+1})z) \mid z \in \zeta(W)\}.
\end{eqnarray*}
The result follows.
\end{proof}

\begin{lemma}
\label{lem:gen}
Let $(\Delta,G,\omega,\zeta)$ be a voltage graph where $\zeta$ is $T$-normalized for some spanning tree $T$ of $\Delta$. Let $\mathcal{W}$ be the set of closed walks in $\Delta$ based at some vertex $v_0 \in \V(\Delta)$. 
Let $A= \langle \zeta(x) \mid x \in \D(\Delta) \setminus \D(T) \rangle$ and let
$B= \langle \omega(v) \mid  v \in \V(\Delta) \rangle$. 
Then $\langle \zeta(W) \mid W \in \mathcal{W}\rangle = \langle A, B \rangle$. 
\end{lemma}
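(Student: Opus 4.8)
The plan is to prove the two inclusions separately, writing $H := \langle \zeta(W) \mid W \in \mathcal{W}\rangle$ for the subgroup on the left-hand side. The whole argument rests on reading $\zeta(W)$, as defined in \eqref{eq:zetaW}, as a product (in the sense of products of subsets of $G$) of factors $\zeta(x_i)\omega(v_i)$, one for each dart $x_i$ traversed by $W$, together with the elementary observation that a product $S_1 S_2 \cdots S_m$ of subsets of a group, each containing the identity, automatically contains every individual factor $S_j$ (choose the identity in all other factors), and more generally contains the product of any single chosen representative from each $S_i$. Since every vertex subgroup $\omega(v_i)$ contains $1$, this observation will be applied repeatedly.

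For the inclusion $H \leq \langle A, B\rangle$, I would fix $W = (x_0, \ldots, x_{n-1}) \in \mathcal{W}$ and inspect the factors of $\zeta(W)$. Each vertex factor $\omega(v_i)$ lies in $B$. For the dart factors, if $x_i \in \D(T)$ then $\zeta(x_i) = 1$ because $\zeta$ is $T$-normalised, while if $x_i \notin \D(T)$ then $\zeta(x_i)$ is by definition one of the generators of $A$; here one uses that $\D(T)$ is closed under taking inverses, so that a non-tree dart and its inverse both contribute generators of $A$. Hence every factor $\zeta(x_i)\omega(v_i)$ is a subset of the subgroup $\langle A, B\rangle$, and since a product of subsets of a subgroup stays inside that subgroup, $\zeta(W) \subseteq \langle A, B\rangle$. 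As this holds for all $W$, we obtain $H \leq \langle A, B\rangle$.

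For the reverse inclusion I would show that each generator of $B$ and of $A$ lies in $H$. To capture $\omega(v)$, let $P$ be the path from $v_0$ to $v$ in the spanning tree $T$ and consider the closed walk $W = P P^{-1}$; every dart of $W$ is a tree dart, so $\zeta(W)$ is a product of vertex subgroups $\omega(v_i)$, and since $v$ occurs among the $v_i$ (it is the initial vertex of the first dart of $P^{-1}$), the extraction observation gives $\omega(v) \subseteq \zeta(W) \subseteq H$. Ranging over all $v$ yields $B \leq H$ (the degenerate case $v = v_0$, where $P$ is empty, is covered by applying the argument to any closed walk through $v_0$, for instance one of the form $(x, x^{-1})$, using \eqref{eq:volt1} and \eqref{eq:volt3} to see that $\zeta(x^{-1})\zeta(x)\,\omega(v_0) = \omega(v_0)$). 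To capture a generator $\zeta(x)$ with $x \in \D(\Delta)\setminus \D(T)$, set $a = \beg x$, $b = \term x$, let $P_a, P_b$ be the tree paths from $v_0$ to $a$ and to $b$, and consider the closed walk $W = P_a\,(x)\,P_b^{-1}$. All its darts are tree darts except $x$, so $\zeta(W)$ is a product of vertex subgroups together with the single coset factor $\zeta(x)\omega(a)$; choosing the identity in every subgroup factor and the element $\zeta(x)$ in the coset factor shows $\zeta(x) \in \zeta(W) \subseteq H$. Hence $A \leq H$, and therefore $\langle A, B\rangle \leq H$, which completes the proof.

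The precise order of the factors in \eqref{eq:zetaW} plays no role, since the extraction observation is order-insensitive; the only points demanding care are the interpretation of $\zeta(W)$ as a product of subsets of $G$ rather than a single element, and the repeated use of the fact that each $\omega(v_i)$ contains the identity. The one genuinely essential hypothesis is $T$-normalisation: it is exactly what forces tree darts to contribute trivially, which simultaneously keeps $\zeta(W)$ inside $\langle A, B\rangle$ in the easy direction and makes the tree-walk voltages pure products of vertex subgroups, from which the $\omega(v)$ can be read off in the hard direction.
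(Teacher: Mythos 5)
Your proof is correct and takes essentially the same route as the paper's: the easy inclusion $\langle \zeta(W) \mid W \in \mathcal{W}\rangle \leq \langle A,B\rangle$ by inspecting the factors of $\zeta(W)$ in \eqref{eq:zetaW}, then $B$ captured via the closed walks $PP^{-1}$ along tree paths, and $A$ captured via a closed walk whose only non-tree dart is $x$. The only differences are cosmetic: you make the ``extraction'' observation (a product of subsets each containing $1$ contains every factor) explicit, and you patch the degenerate case $v = v_0$, which the paper's proof silently glosses over.
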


\begin{proof}
First, note that by Equality~(\ref{eq:zetaW}), it follows that $\zeta(W)\in \langle A, B \rangle$ for all $W \in \mathcal{W}$. Hence $\langle \zeta(W) \mid W \in \mathcal{W}\rangle \leq \langle A, B \rangle$. 

We next show that $B \le \langle \zeta(W) \mid W \in \mathcal{W}\rangle$.
Let $v \in \V(\Delta)$ and denote by $P$ the unique $v_0v$-path in $T$. Further, let $\lin{P} = 
(x_0,x_1,\cdots,x_{n-1})$ be the closed walk $P \cdot P^{-1}$ based at $v_0$, obtained by concatenating the path $P$ and its inverse path $P^{-1}$. Note that $v=\beg(x_m)$ for $m=n/2$.
  Since every dart lying on $T$ has trivial voltage, $\zeta(\lin{P})= \prod_{i=0}^{n-1} \omega(\beg x_{n-1-i})$ and so $\omega(v) \subseteq \zeta(\lin{P}) \subseteq \{\zeta(W) \mid W \in \mathcal{W}\}$. This shows that $B \le \langle \zeta(W) \mid W \in \mathcal{W}\rangle$, as claimed.
  
  Now, let $x \in \D(\Delta)$. If $x$ lies on $T$, then $\zeta(x)=1_G \in \{\zeta(W) \mid W \in \mathcal{W}\}$. Suppose now $x\not\in \D(T)$. Then, there is a unique shortest closed walk $F_x$ based at $v_0$ such that $x$ is the only dart in $F_x$ not contained in $T$. 
  Since every dart of $F_x$, except possibly $x$, has trivial voltage, Equation~(\ref{eq:zetaW}) implies that $\zeta(x) \in \zeta(F_x)$. Since $F_x$ is a closed walk based at $v_0$, it follows that
$\zeta(x) \in \{\zeta(W) \mid W \in \mathcal{W}\}$, and thus $A\le \langle \zeta(W) \mid W \in \mathcal{W}\rangle$.
\end{proof}

\begin{theorem}
\label{lem:connect}
Let $(\Delta,G,\omega,\zeta)$ be a generalised voltage graph where $\zeta$ is $T$-normalized for some spanning tree $T$ of $\Delta$. Let $A= \langle \zeta(x) \mid x \in \D(\Delta) \setminus \D(T) \rangle$ and let
$B= \langle \omega(v) \mid  v \in \V(\Delta) \rangle$. Then $\GC(\Delta,G,\omega,\zeta)$ is connected if and only if $G = \langle A, B \rangle$. 
\end{theorem}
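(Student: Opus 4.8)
The plan is to reduce the connectivity of $\Gamma := \GC(\Delta,G,\omega,\zeta)$ to a statement about which fibres are connected to a base fibre, and then to use the walk-voltage machinery of Lemmas~\ref{lem:endvertex} and~\ref{lem:gen} to translate this into the group-theoretic condition $G = \langle A, B\rangle$. First I would fix a base vertex $v_0 \in \V(\Delta)$ and observe, using the fact that $\Delta$ is connected and the generalised covering projection $\varphi\colon \Gamma \to \Delta$ is an epimorphism, that $\Gamma$ is connected if and only if every vertex of $\Gamma$ is reachable by a walk from the base vertex $(v_0,\omega(v_0))$ of the fibre $\fib(v_0)$. Since any walk in $\Gamma$ projects to a walk in $\Delta$, and since $\Delta$ is connected, reachability reduces to two things: that one can reach every other fibre $\fib(v)$ (which is automatic from connectivity of $\Delta$ by lifting a $v_0v$-path), and that within the base fibre $\fib(v_0)$ itself one can reach every vertex $(v_0, \omega(v_0)g)$ from $(v_0,\omega(v_0))$.

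The key step is to pin down exactly which vertices of the base fibre are reachable. By Lemma~\ref{lem:endvertex}, a closed walk $W$ in $\Delta$ based at $v_0$ lifts to walks in $\Gamma$ starting at $(v_0,\omega(v_0))$ whose final vertices are precisely $\{(v_0,\omega(v_0)z) : z \in \zeta(W)\}$. Therefore the set of vertices in $\fib(v_0)$ reachable from $(v_0,\omega(v_0))$ is exactly $\{(v_0,\omega(v_0)z) : z \in \bigcup_{W \in \mathcal{W}} \zeta(W)\}$, where $\mathcal{W}$ is the set of closed walks based at $v_0$. I would then argue that this reachable set is all of $\fib(v_0)$ if and only if $\bigcup_{W} \zeta(W)$ meets every right coset of $\omega(v_0)$ in $G$, and since the union of all $\zeta(W)$ is closed under the group operations induced by concatenation and inversion of walks, this happens precisely when $\langle \zeta(W) \mid W \in \mathcal{W}\rangle = G$. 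Here one must be a little careful: concatenation of closed walks based at $v_0$ corresponds to the product of the corresponding voltage-subsets, and the inverse walk gives the inverse subset, so the union $\bigcup_W \zeta(W)$ is in fact the subgroup $\langle \zeta(W) \mid W \in \mathcal{W}\rangle$; this is the point where the definition~(\ref{eq:zetaW}) of $\zeta(W)$ as a product of cosets $\zeta(x_i)\omega(v_i)$ does the work, and it already incorporates $\omega(v_0)$ as a subset, so the reachable set is genuinely a union of cosets.

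Having established that $\Gamma$ is connected if and only if $\langle \zeta(W) \mid W \in \mathcal{W}\rangle = G$, the proof concludes by invoking Lemma~\ref{lem:gen}, which (under the $T$-normalisation hypothesis) identifies $\langle \zeta(W) \mid W \in \mathcal{W}\rangle$ with $\langle A, B\rangle$, where $A = \langle \zeta(x) \mid x \in \D(\Delta)\setminus\D(T)\rangle$ and $B = \langle \omega(v) \mid v \in \V(\Delta)\rangle$. Combining the two equivalences yields $\Gamma$ connected if and only if $G = \langle A, B\rangle$, as required. I expect the main obstacle to be the careful bookkeeping in the second step, namely verifying rigorously that the set of reachable vertices in the base fibre is closed under the subgroup operations and equals a union of $\omega(v_0)$-cosets; in particular one must check that reaching a vertex of another fibre and returning contributes exactly the voltage of a closed walk, and that two lifts of walks concatenate compatibly. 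The reduction from global connectivity to base-fibre connectivity, and the final appeal to Lemma~\ref{lem:gen}, are comparatively routine.
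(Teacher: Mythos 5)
Your proposal is correct and follows essentially the same route as the paper's own proof: reduce connectivity of $\GC(\Delta,G,\omega,\zeta)$ to reachability of the vertices in a single fibre $\fib(v)$ from the base vertex $(v,\omega(v))$, identify the reachable vertices via Lemma~\ref{lem:endvertex} with the union of the walk-voltages $\zeta(W)$ over closed walks $W$ based at $v$, and translate this into $G=\langle A,B\rangle$ via Lemma~\ref{lem:gen}. The only cosmetic difference is that you fold the two implications into the single claim that this union is itself a subgroup (closed under concatenation and inversion of walks), whereas the paper argues the two directions separately, writing each $g\in G$ as a product of elements of the union of walk-voltages; both treatments leave the same small closure-under-inverses verification (via conditions (\ref{eq:volt2}) and (\ref{eq:volt3})) to the reader.
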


\begin{proof}
Let $\Gamma = \GC(\Delta,G,\omega,\zeta)$ and let $u,v \in \V(\Delta)$. Since $\Delta$ is connected, for every $\tilde{u} \in \fib(u)$, there exists a $\tilde{u}\tilde{v}$-walk in $\Gamma$ for some $\tilde{v} \in \fib(v)$; in other words, $\fib(v)$ can be reached from any vertex of $\Gamma$. Then, $\Gamma$ is connected if and only if for every vertex $(v,\omega(v)h) \in \fib(v)$, there is a walk connecting  $(v,\omega(v))$ to $(v,\omega(v)h)$. We will show that the latter happens if and only if $\langle A, B \rangle=G$.

Suppose $(v,\omega(v))$ can reach any other vertex in $\fib(v)$ by means of a walk. Such a walk projects in $\Delta$ to a closed walk based at $v$. Let $\mathcal{W}$ be the set of closed walks in $\Delta$ based at $v$ and let $\lin{\mathcal{W}}$ be the set of all walks of $\Gamma$ that project to a walk in $\mathcal{W}$. Define $\zeta_{\mathcal{W}}:=\{x\in\zeta(W)|W\in\mathcal{W}\}$. By Lemma \ref{lem:endvertex}, the end vertex of any walk in $\lin{\mathcal{W}}$ is $(v,\omega(v)g)$ for some $g \in \zeta_{\mathcal{W}}$. Then $\zeta_{\mathcal{W}}$ intersects every coset of $\omega(v)$, and since $\omega(v) \subseteq \zeta_{\mathcal{W}}$ we see that $G = \langle \zeta_{\mathcal{W}} \rangle = \langle A, B \rangle$, where the second equality follows from Lemma \ref{lem:gen}.

For the reverse implication, suppose $G = \langle \zeta_{\mathcal{W}} \rangle = \langle A, B \rangle$. Consider a vertex $(v,\omega(v)g) \in \fib(v)$. Then $g = g_{1}g_{2}\ldots g_{k}$ where $g_i \in \zeta_{\mathcal{W}}$ and thus $g_i \in \zeta(W_i)$ for some walk $W_i \in \mathcal{W}$, for all $i \in \{1,\ldots,k\}$. Clearly the concatenation $W_kW_{k-1}\ldots W_1$ is a walk in $\mathcal{W}$. Furthermore 
$$g = g_{1}g_{2}\ldots g_{k} \in \zeta(W_1)\zeta(W_2)\ldots\zeta(W_k) = \zeta(W_kW_{k-1}\ldots W_1).$$ 
Therefore, there is a walk $W'$ in $\Gamma$ connecting $(v,\omega(v))$ to $(v,\omega(v)g)$ that projects to  $W_kW_{k-1}\ldots W_1$.
\end{proof}

We conclude this section with an interesting application of the above theorem,  whose 
special cases, when $G$ acts transitively on the arcs of $\Gamma$ or on the edges but not on the arcs, are well-known and often-used facts in the algebraic graph theory. 
	
Let $\Gamma$ be a graph, let $G \in \Aut(\Gamma)$, let $\mathcal{T}$ be a transversal of the action of $G$ on $\V(\Gamma)\cup\D(\Gamma)$, let  
$\mathcal{T}_V = \mathcal{T} \cap \V(\Gamma)$,  let $\mathcal{T}_D = \mathcal{T} \cap \D(\Gamma)$, and let  $\mathcal{T}_D^\circ = \{x \in \mathcal{T}_D : x^{-1} \in \mathcal{T}_D\}$.
Further, let $\beg\vert_{\mathcal{T}_D}$ and $\inv\vert_{\mathcal{T}_D^\circ}$ be the restrictions
of the functions $\beg_\Gamma$ and $\inv_\Gamma$ to
 $\mathcal{T}_D$ and $\mathcal{T}_D^\circ$, respectively.
We say that $\mathcal{T}$ is {\em a connected transversal of $G$ on $\Gamma$} provided that the following holds:
(1) $\beg_\Gamma x \in \mathcal{T}_V$ for every $x\in \mathcal{T}_D$; 
(2) the graph $\Gamma[\mathcal{T}]:=(\mathcal{T}_V,\mathcal{T}_D^\circ, \beg\vert_{\D(\Gamma)}, \inv\vert_{\D(\Gamma)})$ is connected.

\begin{lemma}
\label{lem:contrans}
Let  $\Gamma$ be a graph and let $G\le \Aut(\Gamma)$. Then
 there exists a connected transversal of $G$ on $\Gamma$ if and only if
 the quotient graph $\Gamma/G$ is connected.
\end{lemma}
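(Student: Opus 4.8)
The plan is to relate the subgraph $\Gamma[\mathcal{T}]$ to the quotient $\Gamma/G$ through the quotient map $f\colon x\mapsto x^G$, and to prove the two implications separately.

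For the forward implication, suppose $\mathcal{T}$ is a connected transversal. First I would observe that the restriction of $f$ to $\Gamma[\mathcal{T}]$ is a graph morphism into $\Gamma/G$, since $\Gamma[\mathcal{T}]\le\Gamma$ and $f$ commutes with $\beg$ and $\inv$. Because $\mathcal{T}_V$ meets every $G$-orbit of vertices in exactly one point, $f$ restricts to a bijection $\mathcal{T}_V\to\V(\Gamma/G)$, so the image $f(\Gamma[\mathcal{T}])$ is a \emph{spanning} subgraph of $\Gamma/G$. Since morphisms send walks to walks, the image of the connected graph $\Gamma[\mathcal{T}]$ is connected, and a graph admitting a connected spanning subgraph is itself connected. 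Hence $\Gamma/G$ is connected.

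For the converse, the plan is to construct a connected transversal by lifting a spanning tree of $\Gamma/G$. As $\Gamma/G$ is connected, its underlying simple graph is connected (semi-edges and loops sit at a single vertex and cannot join distinct vertices along a walk), so $\Gamma/G$ has a spanning tree $T$, every edge of which is a proper edge $\{d,d^{-1}\}$, that is, neither a semi-edge nor a loop of $\Gamma/G$. I would root $T$ at a vertex $\bar v_0$, fix a representative $v_0$ in the orbit $\bar v_0$, and then traverse $T$ away from the root: for each tree edge from an already-processed vertex $\bar u$ (with chosen representative $u\in\mathcal{T}_V$) to a new vertex $\bar w$, I pick any dart $x_0$ with $x_0^G=d$, where $d$ is the corresponding dart of $\Gamma/G$, and translate it by a suitable $g\in G$ so that $\beg(x_0^g)=u$; setting $x:=x_0^g$ and $w:=\term x$ yields a lift of $d$ with $\beg x=u$ and a representative $w$ of $\bar w$. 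Because $T$ is spanning and acyclic, each vertex of $\Gamma/G$ is reached exactly once, so this defines $\mathcal{T}_V$ together with a collection of lifted darts forming a subgraph of $\Gamma$ that $f$ maps isomorphically onto $T$.

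The crux is arranging that these tree darts land in $\mathcal{T}_D^\circ$, and this is precisely where properness of tree edges is used. Since $d\ne d^{-1}$ in $\Gamma/G$, the darts $x$ and $x^{-1}$ lie in distinct $G$-orbits, so I may declare $x$ the transversal representative of $d=x^G$ and $x^{-1}$ that of $d^{-1}=(x^{-1})^G$; then both lie in $\mathcal{T}_D$, hence in $\mathcal{T}_D^\circ$, and their initial vertices $u,w$ lie in $\mathcal{T}_V$. Finally I would extend this partial choice to a full transversal $\mathcal{T}_D$ of $\D(\Gamma)/G$: for each orbit not yet represented, pick any dart and translate it by an element of $G$ so that its initial vertex becomes the already-chosen representative of the corresponding vertex orbit, which guarantees condition~(1) for every dart of $\mathcal{T}_D$. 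The resulting $\Gamma[\mathcal{T}]$ then contains the lifted spanning tree, which is connected and spans $\mathcal{T}_V$, so $\Gamma[\mathcal{T}]$ is connected and $\mathcal{T}$ is a connected transversal. I expect the main obstacle to be the bookkeeping in this converse — ensuring the lift is globally consistent (each vertex orbit receiving a single representative, and distinct tree edges projecting to distinct orbits) and verifying that a tree dart and its inverse can be placed in the transversal simultaneously, which fails exactly for loops and semi-edges of $\Gamma/G$ and is therefore the reason a genuine spanning tree must be used.
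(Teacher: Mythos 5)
Your proof is correct, and while your forward implication is the same as the paper's (the quotient map sends the connected graph $\Gamma[\mathcal{T}]$ onto a connected spanning subgraph of $\Gamma/G$), your converse takes a genuinely different and more elementary route. The paper never builds anything directly inside $\Gamma$: it invokes Theorem~\ref{the:lift2} to replace $\Gamma$ by the generalised cover $\GC(\Gamma/G,G,\omega,\zeta)$ via an isomorphism carrying $G$-orbits to fibres, then invokes Theorem~\ref{lem:normalized} together with Remark~\ref{rem:fib} to assume the voltage assignment is $T$-normalised for a spanning tree $T$ of $\Gamma/G$; in the normalised cover the canonical representatives $(v,\omega'(v))$, $v\in\V(T)$, and $(x,\omega'(x))$, $x\in\D(T)$, are closed under $\inv$ precisely because tree darts carry trivial voltage, so they form a copy of $T$, which is then extended to one element per fibre and pulled back through the isomorphism. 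You instead lift $T$ directly in $\Gamma$ by a rooted traversal, translating a dart of each tree edge by an automorphism so that it emanates from the representative already chosen for its parent orbit, using the properness of tree edges (no semi-edges, no loops in $T$) to place each lifted dart and its inverse simultaneously in $\mathcal{T}_D$, and then completing $\mathcal{T}_D$ arbitrarily subject to condition~(1); the lifted tree then sits inside $\Gamma[\mathcal{T}]$ as a connected spanning subgraph. Your route is self-contained --- it needs neither the reconstruction theorem nor voltage normalisation, so the lemma could stand before any of that machinery --- and it makes explicit why loops and semi-edges of the quotient must be avoided. What the paper's route buys is economy and coherence with its own toolkit: the consistency bookkeeping that you correctly identify as the crux of the direct construction is exactly what $T$-normalisation packages (trivial voltages on $T$ are the statement that the canonical fibre representatives over $T$ form a coherent lift), so once Theorems~\ref{the:lift2} and \ref{lem:normalized} are available, the converse is almost immediate.
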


\begin{proof}
Let $\Delta:=\Gamma/G$. If $\Delta$ is connected, then it contains a spanning tree $T$.
By Theorem~\ref{the:lift2}, $\Gamma$ is isomorphic to $\GC(\Delta,G,\omega,\zeta)$
for some weight function $\omega$ and generalised voltage assignment $\zeta$.
Moreover, the isomorphism between $\Gamma$ and $\Gamma':=\GC(\Delta,G,\omega,\zeta)$
can be chosen to map $G$-orbits on $\Gamma$ to fibres of $\Gamma'$.
Furthermore, by Theorem~\ref{lem:normalized}, $\Gamma'$ is isomorphic to
$\Gamma'':=\GC(\Delta,G,\omega',\zeta')$, where $\zeta'$ is $T$-normalised.
Again, in view of Remark~\ref{rem:fib}, this isomorphism can be chosen to map fibres to fibres,
and hence there is an isomorphism $\Phi$ between $\Gamma$ and $\Gamma''$ which
maps $G$-orbits on $\Gamma$ to fibres of $\Gamma''$.

Let $X_V=\{(v,\omega'(v)) \mid v \in \V(T)\}$ and $X_D=\{(x,\omega'(x))\mid x \in \D(T)\}$. Let $\beg\vert_X$ and $\inv\vert_X$ be respective restrictions of $\beg_{\Gamma''}$ and $\inv_{\Gamma''}$ to $X_D$. Note that, since $\zeta'$ is $T$-normalised, $(x,\omega'(x)) \in X_D$ implies $(x,\omega'(x))^{-1} \in X_D$. It follows that $X:=(X_V,X_D,\beg\vert_X,\inv\vert_X)$ is a subgraph of $\Gamma''$. Moreover, $X$ is isomorphic to $T$. Now, since no two elements of $X_V \cup X_D$ belong to the same fibre of $\Gamma''$, $X$ can be extended to a set $\mathcal{T}$ containing exactly one element of each fibre of $\Gamma''$. Finally, since $\Phi$ maps $G$-orbits on $\Gamma$ to fibres of $\Gamma''$, we see that $\Phi^{-1} (\mathcal{T})$ is a connected transversal of $G$ on its action on $\Gamma$.

For the converse, observe that if $\mathcal{T}$ is a connected transversal, then its image under the quotient map is a connected spanning subgraph of $\Gamma /G$.

\end{proof}

\begin{theorem}
\label{the:genGv}
Let $\Gamma$ be a  graph, let $G\le \Aut(\Gamma)$ and let $\mathcal{T}$ be a connected transversal of $G$ on $\Gamma$.
Let $\mathcal{T}_V$, $\mathcal{T}_D$ and $\mathcal{T}_D^\circ$ be as in
the  preceding paragraph.
For a dart $x\in \mathcal{T}_D\setminus \mathcal{T}_D^\circ$ let
  $a_x\in G$ be such that $(x^{-1})^{a_x} \in \mathcal{T}_D$.
Then $\Gamma$ is connected if and only if 
$$G=\langle \bigcup_{v \in \mathcal{T}_V}  G_v\, \cup\, 
\{a_x : x \in \mathcal{T}_D\setminus \mathcal{T}_D^\circ \}\rangle.$$
\end{theorem}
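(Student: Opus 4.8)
The plan is to reduce the statement to the connectivity criterion of Theorem~\ref{lem:connect} by realising $\Gamma$ as a generalised cover of its own quotient $\Delta := \Gamma/G$, and then translating the subgroups $A$ and $B$ that appear there into the data $(G_v, a_x)$ attached to the connected transversal $\mathcal{T}$. First I would note that, since a connected transversal exists, Lemma~\ref{lem:contrans} guarantees that $\Delta = \Gamma/G$ is connected, so Theorem~\ref{the:lift2} applies: there is a weight function $\omega$ with $\omega(v^G)=G_v$ for $v\in\mathcal{T}_V$ and $\omega(x^G)=G_x$ for $x\in\mathcal{T}_D$, together with a voltage assignment $\zeta$ satisfying (\ref{eq:zetacondition}), such that $\Gamma \cong \GC(\Delta,G,\omega,\zeta)$. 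As isomorphic graphs are simultaneously connected, it suffices to decide connectivity of this generalised cover.

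Next I would fix a convenient spanning tree and check that $\zeta$ is already normalised on it. Because $\Gamma[\mathcal{T}]=(\mathcal{T}_V,\mathcal{T}_D^\circ,\ldots)$ is connected and the quotient map is a morphism restricting injectively on $\mathcal{T}_V$ and on $\mathcal{T}_D^\circ$, its image is a connected spanning subgraph of $\Delta$ all of whose darts lie in $\{x^G : x\in\mathcal{T}_D^\circ\}$; I pick a spanning tree $T$ of $\Delta$ inside this image, so that $\D(T)\subseteq\{x^G : x\in\mathcal{T}_D^\circ\}$. For any $x\in\mathcal{T}_D^\circ$ one has $x^{-1}\in\mathcal{T}_D$, hence $\iota(x)=x^{-1}$, and then (\ref{eq:zetacondition}) forces $\zeta(x^G)\in G_{x^{-1}}=G_x$; by Remark~\ref{rem:trivial} I may take $\zeta(x^G)=1$ on all such darts, so $\zeta$ is automatically $T$-normalised and no further appeal to Theorem~\ref{lem:normalized} is needed.

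Now comes the core translation. By Theorem~\ref{lem:connect}, $\GC(\Delta,G,\omega,\zeta)$ is connected iff $G=\langle A,B\rangle$, where $B=\langle \omega(v) : v\in\V(\Delta)\rangle=\langle G_v : v\in\mathcal{T}_V\rangle$ and $A=\langle \zeta(x^G) : x^G\in\D(\Delta)\setminus\D(T)\rangle$. The decisive computation concerns $x\in\mathcal{T}_D\setminus\mathcal{T}_D^\circ$: by definition $a_x$ sends $x^{-1}$ to its transversal representative, i.e.\ $\iota(x)=(x^{-1})^{a_x}$, so substituting into (\ref{eq:zetacondition}) gives $a_x\zeta(x^G)\in G_{x^{-1}}=G_x$, whence $\zeta(x^G)\in a_x^{-1}G_x$. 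Since $\beg x\in\mathcal{T}_V$ yields $G_x\le G_{\beg x}\le B$, each such $\zeta(x^G)$ is congruent to $a_x^{-1}$ modulo $B$, while for $x\in\mathcal{T}_D^\circ$ one simply has $\zeta(x^G)\in G_x\le B$. A transversal contains one representative per orbit, and $\D(T)\subseteq\{x^G : x\in\mathcal{T}_D^\circ\}$, so every $x\in\mathcal{T}_D\setminus\mathcal{T}_D^\circ$ satisfies $x^G\notin\D(T)$; splitting $\D(\Delta)\setminus\D(T)$ accordingly gives $\langle A,B\rangle=\langle\{G_v : v\in\mathcal{T}_V\}\cup\{a_x : x\in\mathcal{T}_D\setminus\mathcal{T}_D^\circ\}\rangle$, and the theorem follows.

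The routine parts are the bookkeeping about which orbit representatives lie inside $\mathcal{T}_D^\circ$ versus $\mathcal{T}_D\setminus\mathcal{T}_D^\circ$; the step I expect to require the most care is the relation $\zeta(x^G)\in a_x^{-1}G_x$, since it must be read off correctly from the definition of $\zeta$ in Theorem~\ref{the:lift2} and from the definition of $a_x$, and then combined with the containment $G_x\le B$ so that $a_x$ and $\zeta(x^G)$ generate the same subgroup together with $B$. Verifying that the chosen $T$ really lies in the image of $\mathcal{T}_D^\circ$ (so that $\zeta$ is $T$-normalised and the separation of generators is clean) is the other point that must be argued rather than asserted.
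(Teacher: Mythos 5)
Your proof is correct and takes essentially the same route as the paper: both reduce the statement to Theorem~\ref{lem:connect} via Theorem~\ref{the:lift2}, using the quotient image of $\Gamma[\mathcal{T}]$ to obtain a spanning tree $T$ on which $\zeta$ is trivial, and then identifying $A$ and $B$ with the stated generators. The only (immaterial) difference is that the paper defines $\zeta$ explicitly, setting $\zeta(x^G)=(a_x)^{-1}$ for $x\in\mathcal{T}_D\setminus\mathcal{T}_D^\circ$ and then verifying condition (\ref{eq:zetacondition}), whereas you start from a generic $\zeta$ satisfying (\ref{eq:zetacondition}) and deduce $\zeta(x^G)\in a_x^{-1}G_x$, absorbing the $G_x$ factor into $B$.
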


\begin{proof}
Let $\Delta:=\Gamma/G$ and define $\zeta: \D(\Delta) \to G$ by 
$$
\zeta(x^G) = \left\{
        \begin{array}{ll}        	
            1_G & \hbox{ if } x \in \mathcal{T}_D^\circ;\\
            (a_x)^{-1} &  \hbox{ if } x \in \mathcal{T}_D\setminus \mathcal{T}_D^\circ.
        \end{array}
    \right.
$$ 
As in Thoerem~\ref{the:lift2}, for $x\in\mathcal{T}_D$ 
let $\iota(x)$ be the unique element of $\mathcal{T}_D$ such that $x^{-1} \in \iota(x)^G$,
and let
 $\omega: \V(\Delta) \cup \D(\Delta) \to S(G)$ be defined by
 $$\omega(x^G):=G_x \>\> \hbox{ for every }\>\> x \in \mathcal{T}.$$

We will first show that 
 $\zeta$ satisfies the condition (\ref{eq:zetacondition}) stated in Theorem~\ref{the:lift2}; that is, we will show that $\iota(x)^{\zeta(x^G)}=x^{-1}$ for every $x\in\mathcal{T}_D$.
 First suppose that $x \in \mathcal{T}_D\setminus \mathcal{T}_D^\circ$. Then $\iota(x)^{\zeta(x^G)} = \iota(x)^{(a_x)^{-1}}$ which by the definition of $a_x$ equals $x^{-1}$. On the other hand, if $x \in \mathcal{T}_D^\circ$, then $\iota(x) = x^{-1}$ and hence $\iota(x)^{\zeta(x^G)} = x^{-1}$. Therefore $\zeta$ satisfies condition the (\ref{eq:zetacondition})  stated in Theorem~\ref{the:lift2}.
  By Theorem~\ref{the:lift2} it follows that $(\Delta,G,\omega,\zeta)$ is a generalised
 voltage graph and $\Gamma\cong \GC(\Delta,G,\omega,\zeta)$.

Let $\Gamma[\mathcal{T}]$ be as in the paragraph preceding Theorem~\ref{the:genGv}. Since $\V(\Gamma[\mathcal{T}]) = \mathcal{T}_V$ is a transversal of the action of $G$ on $\V(\Gamma)$, it follows that the image $\Delta'$ of $\Gamma[\mathcal{T}]$
 under the quotient projection $\Gamma \to \Delta$ is a spanning subgraph of $\Delta$. Moreover,
since $\Gamma[\mathcal{T}]$ is connected, $\Delta'$ is connected and hence it contains
a spanning tree $T$ of $\Delta$. However, $\zeta$ is trivial on all the darts of $\Delta'$, implying that
$\zeta$ is $T$-normalised.

Let $A= \langle \zeta(X) \mid X \in \D(\Delta) \setminus \D(T) \rangle$ and $B= \langle \omega(v) \mid  v \in \V(\Delta) \rangle$, as in Theorem~\ref{lem:connect}. Observe that
$A= \langle \zeta(x^G) \mid x \in  \mathcal{T}_D\setminus \mathcal{T}_D^\circ \rangle =
\langle (a_x)^{-1} \mid x \in  \mathcal{T}_D\setminus \mathcal{T}_D^\circ \rangle$.
Similarly, $B=\langle G_v \mid v \in \mathcal{T}_V\rangle$.
 The result now follows from Theorem~\ref{lem:connect}.
\end{proof}

Note that Proposition \ref{prop:con} is the special case of Theorem~\ref{the:genGv} when the quotient graph $\Gamma/G$ is a tree.

\section{Simplicity}
\label{sec:simp}

In this section we  explore the simplicity of generalised covers. Lemmas \ref{lem:simple1} and \ref{lem:simple3} give necessary and sufficient conditions on a generalised voltage graph for its generalised cover to have, respectively, a pair of parallel darts or a self-inverse dart.
Since existence of a pair of parallel darts is equivalent to existence of a pair of parallel edges or a loop, and existence of a self-inverse dart is equivalent to existence of a semi-edge, these two lemmas yield a characterisation of simple generalised covers, given in  Theorem \ref{theo:simple}.

\begin{lemma}
\label{lem:simple1}
Let $(\Delta,G,\omega,\zeta)$ be a generalised voltage graph and let $\Gamma=\GC(\Delta,G,\omega,\zeta)$. Then there is a pair of parallel darts in $\Gamma$ if and only if $\zeta(y)h\zeta(x)^{-1} \in \omega(\term_\Delta(x))$ for some $x,y\in \D(\Delta)$ and
$h \in \omega(\beg_\Delta(x))$ such that $\beg_\Delta(x) = \beg_\Delta(y)$, $\term_\Delta(x) = \term_\Delta(y)$, and  either $x\not = y$, or $x=y$ but $h\not\in \omega(x)$.
\end{lemma}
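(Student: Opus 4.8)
The plan is to read off the condition directly from what it means for two darts of $\Gamma$ to be parallel, using the begin formula of Definition~\ref{def:gencov} together with the end formula recorded in the Remark following Lemma~\ref{lem:OK}. First I would write an arbitrary pair of darts of $\Gamma$ as $X = (x,\omega(x)g)$ and $Y = (y,\omega(y)k)$ with $x,y \in \D(\Delta)$ and $g,k \in G$. By definition these are parallel precisely when $X \neq Y$, $\beg_\Gamma X = \beg_\Gamma Y$ and $\term_\Gamma X = \term_\Gamma Y$. Since $\beg_\Gamma(x,\omega(x)g) = (\beg_\Delta x, \omega(\beg_\Delta x)g)$, the condition $\beg_\Gamma X = \beg_\Gamma Y$ is equivalent to $\beg_\Delta x = \beg_\Delta y =: u$ together with the coset equation $\omega(u)g = \omega(u)k$. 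Likewise, using $\term_\Gamma(x,\omega(x)g) = (\term_\Delta x, \omega(\term_\Delta x)\zeta(x)g)$, the condition $\term_\Gamma X = \term_\Gamma Y$ is equivalent to $\term_\Delta x = \term_\Delta y =: w$ together with $\omega(w)\zeta(x)g = \omega(w)\zeta(y)k$.

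The key bookkeeping step is the substitution $h := kg^{-1}$, chosen (rather than $gk^{-1}$) so as to land on the exact form appearing in the statement. With this choice the begin coset equation $\omega(u)g = \omega(u)k$ is equivalent to $h \in \omega(u) = \omega(\beg_\Delta x)$. For the end coset equation, recall that two right cosets satisfy $\omega(w)s_1 = \omega(w)s_2$ iff $s_1 s_2^{-1} \in \omega(w)$; applied to $s_1 = \zeta(x)g$ and $s_2 = \zeta(y)k$ this reads $\zeta(x)\,gk^{-1}\,\zeta(y)^{-1} \in \omega(w)$, and since $gk^{-1} = h^{-1}$ and $\omega(w)$ is a subgroup (hence closed under inversion), this is equivalent to $\zeta(y)h\zeta(x)^{-1} \in \omega(w) = \omega(\term_\Delta x)$. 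This establishes the forward implication modulo the distinctness clause: if $x \neq y$ the darts $X,Y$ are automatically distinct, whereas if $x = y$ then $X \neq Y$ forces $\omega(x)g \neq \omega(x)k$, i.e. $h = kg^{-1} \notin \omega(x)$, which is exactly the second alternative in the statement.

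For the converse I would simply reverse the construction: given $x,y$ and $h \in \omega(\beg_\Delta x)$ satisfying $\beg_\Delta x = \beg_\Delta y = u$, $\term_\Delta x = \term_\Delta y = w$, $\zeta(y)h\zeta(x)^{-1} \in \omega(w)$, and the appropriate distinctness alternative, set $g := 1_G$ and $k := h$, and form $X := (x,\omega(x))$ and $Y := (y,\omega(y)h)$. Then $\beg_\Gamma X = (u,\omega(u))$ and $\beg_\Gamma Y = (u,\omega(u)h) = (u,\omega(u))$ since $h \in \omega(u)$; and $\term_\Gamma X = (w,\omega(w)\zeta(x))$ equals $\term_\Gamma Y = (w,\omega(w)\zeta(y)h)$ precisely because $\zeta(y)h\zeta(x)^{-1} \in \omega(w)$. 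Distinctness of $X$ and $Y$ follows from the case analysis as before. I expect no genuine obstacle here; the only delicate points are purely clerical, namely fixing the convention $h = kg^{-1}$ so that the inversion of the subgroup element produces $\zeta(y)h\zeta(x)^{-1}$ in the stated order, and treating the $x=y$ case of the distinctness condition carefully. The whole argument is a direct unwinding of the coset definitions, so I would present it as a single biconditional chain rather than two separate directions.
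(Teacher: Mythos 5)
Your proposal is correct and follows essentially the same route as the paper: both proofs unwind the parallelism condition into the two coset equations and read off the stated criterion. The only cosmetic difference is that the paper first normalises one dart to $(x,\omega(x))$ by invoking the fibre-transitive automorphisms $\bar h$ of Lemma~\ref{lem:auto}, whereas you achieve the same normalisation by the direct substitution $h:=kg^{-1}$, which is equally valid.
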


\begin{proof}
For simplicity, we shall use the abbreviations $\beg$ and $\term$ for $\beg_\Delta$ and $\term_\Delta$, respectively, throughout this proof.

Suppose  there is a pair of parallel darts in $\Gamma$. Since $\Aut(\Gamma)$ acts
transitively on each dart-fibre, we may assume without loss of generality that
one of these two darts is of the form 
$(x,\omega(x))$ for some $x\in \D(\Delta)$. Then the other dart in the pair is of the form 
$(y,\omega(y)h)$ where
$$
\beg_\Gamma(x,\omega(x)) = \beg_\Gamma(y,\omega(y)h)\> \hbox{ and } \>
\term_\Gamma(x,\omega(x)) = \term_\Gamma(y,\omega(y)h).
$$
The first equality implies that 
$\beg(x) = \beg(y)$ and 
$\omega(\beg(x)) = \omega(\beg(y))h$.
Similarly, from the second equality, we deduce that
$\term(x) = \term(y)$ and
$\omega(\term(x))\zeta(x) = \omega(\term(y))\zeta(y)h$.
In particular, $\zeta(y)h\zeta(x)^{-1} \in  \omega(\term(x))$.
Since $(x,\omega(x)) \not= (y,\omega(y)h)$, we also see that
either $x\not = y$, or $x=y$ but $h\not\in \omega(x)$.

Conversely, if $\zeta(y)h\zeta(x)^{-1} \in \omega(\term(x))$ for some $x,y\in \D(\Delta)$ and
$h \in \omega(\beg(x))$ such that $\beg(x) = \beg(y)$, $\term(x) = \term(y)$, 
and  either $x\not = y$, or $x=y$ but $h\not\in \omega(x)$, 
then we see that the darts $(x,\omega(x))$ and $(y,\omega(y)h)$ are distinct. 
Furthermore, $\omega(\term(x))\zeta(x) = \omega(\term(x))\zeta(y)h = \omega(\term(y))\zeta(y)h$, from which we see that $\term_\Gamma(x,\omega(x))=\term_\Gamma(y,\omega(y)h)$. Finally, since $h \in \omega(\beg(x))=\omega(\beg(y))$ we see that $\omega(\beg(x)) = \omega(\beg(y))h$ and so $\beg_\Gamma(x,\omega(x))=\beg_\Gamma(y,\omega(y)h)$. Therefore $(x,\omega(x))$ and $(y,\omega(y)h)$ are parallel darts of $\Gamma$.
\end{proof}

\begin{lemma}
\label{lem:simple3}
Let $(\Delta,G,\omega,\zeta)$ be a generalised voltage graph and let 
$\Gamma=\GC(\Delta,G,\omega,\zeta)$. 
Then $\Gamma$ has a semi-edge if and only if
there is $x\in \D(\Delta)$ such that $\zeta(x) \in \omega(x)$.
%
\end{lemma}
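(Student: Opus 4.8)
The plan is to reduce the statement to a direct computation with the inverse map of the cover. Recall that, by Definition~\ref{def:gencov}, every dart of $\Gamma$ has the form $X=(x,\omega(x)g)$ with $x\in\D(\Delta)$ and $g\in G$, and that $\inv_\Gamma X=(x^{-1},\omega(x^{-1})\zeta(x)g)$. Since $\Gamma$ has a semi-edge exactly when some dart is fixed by $\inv_\Gamma$, I would first characterise when $\inv_\Gamma X=X$ and then observe that this characterisation depends only on $x$, not on $g$. Concretely, I would prove the sharper statement that a dart $(x,\omega(x)g)$ of $\Gamma$ is self-inverse if and only if $x=x^{-1}$ in $\Delta$ and $\zeta(x)\in\omega(x)$; the asserted equivalence then follows by letting $x$ range over $\D(\Delta)$.

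For the forward implication, suppose $X=(x,\omega(x)g)$ satisfies $\inv_\Gamma X=X$, that is, $(x^{-1},\omega(x^{-1})\zeta(x)g)=(x,\omega(x)g)$. Comparing first coordinates forces $x=x^{-1}$, so $x$ underlies a semi-edge of $\Delta$; in particular $\omega(x^{-1})=\omega(x)$. Comparing the cosets then gives $\omega(x)\zeta(x)g=\omega(x)g$, and cancelling $g$ on the right yields the right-coset identity $\omega(x)\zeta(x)=\omega(x)$, which is equivalent to $\zeta(x)\in\omega(x)$. Thus a self-inverse dart produces a dart $x\in\D(\Delta)$ with $\zeta(x)\in\omega(x)$. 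For the converse, given $x\in\D(\Delta)$ with $x=x^{-1}$ and $\zeta(x)\in\omega(x)$, I would compute directly that the dart $(x,\omega(x))$ has inverse $(x^{-1},\omega(x^{-1})\zeta(x))$; using (\ref{eq:volt2}) we have $\omega(x^{-1})=\omega(x)^{\zeta(x)^{-1}}=\omega(x)$ (conjugation by an element of $\omega(x)$), whence the inverse equals $(x,\omega(x)\zeta(x))=(x,\omega(x))$, so the dart is self-inverse and $\Gamma$ has a semi-edge.

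The computation itself is short, so there is no serious obstacle; the one point requiring care is the interplay between the two coordinates of a dart of $\Gamma$. The genuine content, which I would make explicit, is that a self-inverse dart of $\Gamma$ must lie over a self-inverse dart of $\Delta$, so that the darts $x$ relevant to the statement are precisely the semi-edge darts of the base graph; for such $x$ the hypothesis $\zeta(x)\in\omega(x)$ is automatically compatible with the voltage-graph axioms, since (\ref{eq:volt3}) already forces $\zeta(x)^2\in\omega(x)$ when $x=x^{-1}$. I would also record that the coset condition $\zeta(x)\in\omega(x)$ is independent of $g$, so that either every dart in the fibre $\fib(x)$ is self-inverse or none is, in agreement with the fact (Lemma~\ref{lem:auto}) that $G$ acts transitively on each fibre.
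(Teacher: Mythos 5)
Your proposal is correct and takes essentially the same route as the paper: both reduce the existence of a semi-edge to that of a self-inverse dart, compute $\inv_\Gamma(x,\omega(x)g)=(x^{-1},\omega(x^{-1})\zeta(x)g)$, and read off that such a dart is self-inverse precisely when $x=x^{-1}$ and $\zeta(x)\in\omega(x)$; the only cosmetic difference is that you cancel the coset representative $g$ by hand, where the paper first invokes transitivity of $\Aut(\Gamma)$ on each fibre to normalise the dart to $(x,\omega(x))$. One remark: what you (and the paper) actually establish is the refined equivalence carrying the extra condition $x=x^{-1}$, exactly as recorded in condition (3) of Theorem~\ref{theo:simple}, so your closing claim that the lemma as literally stated ``follows by letting $x$ range over $\D(\Delta)$'' inherits the same slight imprecision present in the paper's own statement rather than introducing a new gap.
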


\begin{proof}
Recall that a graph has a semi-edge if and only if it has a self-inverse dart. 
Since $\Aut(\Gamma)$ is transitive on each fibre, we may assume that if such a dart exists,
then it is of the form $(x,\omega(x))$ for some $x\in \D(\Delta)$. Recall that the inverse of this dart is
$(x^{-1},\omega(x^{-1})\zeta(x))$. Hence the dart $(x,\omega(x))$
is self-inverse if and only if $x=x^{-1}$ and $\zeta(x) \in \omega(x)$.
\end{proof}

The following characterisation of generalised voltage graph that yield simple generalised covers
 is a direct consequence of Lemmas~\ref{lem:simple1} and \ref{lem:simple3}.

\begin{theorem}
\label{theo:simple}
Let $(\Delta,G,\omega,\zeta)$ be a generalised voltage graph and let $\Gamma=\GC(\Delta,G,\omega,\zeta)$. Then $\Gamma$ is a simple graph if and only if for all darts $x\in \D(\Delta)$
all of the following conditions hold:
\begin{enumerate}
\item $\zeta(x)h\zeta(x)^{-1} \notin \omega(\term_\Delta(x))$ for all $h \in \omega(\beg_\Delta(x)) \setminus \omega(x)$;         
\item  $\zeta(y)h\zeta(x)^{-1} \not\in \omega(\term_\Delta(x))$
          for all darts $y$, $y\not = x$, which are parallel to $x$ in $\Delta$
          and for all $h\in \omega(\beg_\Delta(x))$;
\item $\zeta(x)\not \in \omega(x)$ if $x=x^{-1}$.
\end{enumerate}
\end{theorem}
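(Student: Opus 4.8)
The plan is to reduce the theorem entirely to Lemmas~\ref{lem:simple1} and~\ref{lem:simple3} by first rephrasing simplicity of $\Gamma$ as a condition on its darts. Recall that $\Gamma$ fails to be simple exactly when it contains a semi-edge, a loop, or a pair of distinct parallel edges. As observed at the start of the section, a loop or a pair of distinct parallel edges is present if and only if $\Gamma$ has a pair of parallel darts, whereas a semi-edge is present if and only if $\Gamma$ has a self-inverse dart. Thus I would first establish the clean reformulation: $\Gamma$ is simple if and only if it has neither a pair of parallel darts nor a self-inverse dart. This step is where the definitions of loop, parallel edges, and semi-edge from Section~\ref{sec:def} must be unwound carefully.

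Next I would dispose of the self-inverse darts. By Lemma~\ref{lem:simple3}, and more precisely by the fact established in its proof that a dart $(x,\omega(x))$ is self-inverse exactly when $x=x^{-1}$ and $\zeta(x)\in\omega(x)$, the graph $\Gamma$ has no self-inverse dart if and only if $\zeta(x)\notin\omega(x)$ for every $x\in\D(\Delta)$ satisfying $x=x^{-1}$. This is precisely condition~(3), so this half of the equivalence is immediate.

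To treat the parallel darts I would apply Lemma~\ref{lem:simple1}, which asserts that $\Gamma$ contains a pair of parallel darts if and only if there exist $x,y\in\D(\Delta)$ and $h\in\omega(\beg_\Delta(x))$ with $\beg_\Delta(x)=\beg_\Delta(y)$, $\term_\Delta(x)=\term_\Delta(y)$, and either $x\neq y$, or $x=y$ but $h\notin\omega(x)$, for which $\zeta(y)h\zeta(x)^{-1}\in\omega(\term_\Delta(x))$. Passing to the contrapositive, the absence of parallel darts is the statement that $\zeta(y)h\zeta(x)^{-1}\notin\omega(\term_\Delta(x))$ for all admissible $x,y,h$. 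I would then split this universally quantified condition according to the dichotomy in the lemma: the diagonal case $x=y$, which forces $h\in\omega(\beg_\Delta(x))\setminus\omega(x)$, yields exactly condition~(1), while the off-diagonal case of two distinct parallel darts yields exactly condition~(2). Conjoining these with condition~(3) gives the claimed characterisation.

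The argument is thus essentially bookkeeping, and the main (minor) obstacle I anticipate lies in making the reformulation of the first paragraph fully rigorous: one must check both directions of the equivalence between \emph{a loop or a pair of distinct parallel edges} and \emph{a pair of parallel darts}, being careful to distinguish the degenerate situation in which the two parallel darts are $x$ and $x^{-1}$ (a loop) from the case of darts belonging to two genuinely different edges. Once this translation is pinned down, matching the two cases of Lemma~\ref{lem:simple1} to conditions~(1) and~(2) is routine.
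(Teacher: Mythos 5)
Your proposal is correct and takes essentially the same route as the paper: Theorem~\ref{theo:simple} is obtained there as a direct consequence of Lemmas~\ref{lem:simple1} and~\ref{lem:simple3}, via precisely the translation you describe between simplicity and the absence of parallel darts and self-inverse darts. The only difference is that you make explicit the bookkeeping (the dart/edge translation and the case split $x=y$ versus $x\neq y$ in the negation of Lemma~\ref{lem:simple1}) that the paper leaves to the reader.
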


\end{document}